\newtheorem{theorem}{Theorem}[section]
\newtheorem{proposition}[theorem]{Proposition}
\theoremstyle{definition}
\newtheorem{conjecture/question}[theorem]{Conjecture/Question}
\newtheorem{remark/definition}[theorem]{Remark/Definition}
\newtheorem{terminology/notation}[theorem]{Terminology/Notation}
\def\PP{{\textbf P}}
\def\OO{\mathcal{O}}
\def\cD{\mathcal{D}}
\def\cA{\mathcal{A}}
\def\F{\mathcal{F}}
\def\E{\mathcal{E}}
\def\G{\mathcal{G}}
\def\L{\mathcal{L}}
\def\I{\mathcal{I}}
\def\cM{\mathcal{M}}
\def\cV{\mathcal{V}}
\def\rr{\overline{\mathcal{R}}}
\def\cC{\mathcal{C}}
\def\H{\mathcal{H}}
\def\Pic0{{\rm Pic}^0(X)}
\def\mm{\overline{\mathcal{M}}}
\def\hh{\overline{\mathcal{H}}}
\begin{document}
\title{Effective divisors on Hurwitz spaces}
\author[G. Farkas]{Gavril Farkas}

\address{Humboldt-Universit\"at zu Berlin, Institut f\"ur Mathematik,  Unter den Linden 6
\hfill \newline\texttt{}
 \indent 10099 Berlin, Germany} \email{{\tt farkas@math.hu-berlin.de}}

\maketitle

\begin{center}
\emph{To Bill Fulton on the occasion of his 80th birthday.}
\end{center}

\begin{abstract}
We prove the effectiveness of the canonical bundle of a partial compactification of several Hurwitz spaces $\hh_{g,k}$ of degree $k$ admissible covers of $\PP^1$ from  curves of genus $14\leq g\leq 19$. This suggests that these moduli spaces are of general type.
\end{abstract}

\vskip 5pt

Following a principle due to Mumford, most moduli spaces that appear in algebraic geometry (classifying curves, abelian varieties, $K3$ surfaces) are of general type, with a finite number of exceptions, which are unirational, or at least uniruled. Understanding the transition  from negative Kodaira dimension to being of general type is usually quite difficult. The aim of this paper is to investigate to some extent this principle in the case of a prominent parameter space of curves, namely the Hurwitz space $\H_{g,k}$ classifying degree $k$ covers $C\rightarrow \PP^1$ with only simple ramification from a smooth curve $C$ of genus $g$. Hurwitz spaces provide an interesting bridge between the much more accessible moduli spaces of pointed rational curves and the moduli space of curves. The study of covers of the projective line emerges directly from Riemann's Existence Theorem and Hurwitz spaces viewed as classifying spaces for covers of curves have been classically used by Clebsch \cite{Cl} and Hurwitz \cite{Hu} to establish the irreducibility of the moduli space of curves. Fulton was the first to pick up the problem in the modern age and his influential paper \cite{Ful} initiated the treatment of Hurwitz spaces in arbitrary characteristic. Harris and Mumford \cite{HM} compactified Hurwitz spaces in order to show that $\mm_g$ is of general type for large $g$, see also \cite{H}. Their work led the way to the theory of admissible covers which has many current ramifications in Gromov-Witten theory.  Later,  Harris and Morrison \cite{HMo} bound from below the slope of all effective divisors on $\mm_g$ using the geometry of compactified Hurwitz spaces.

\vskip 4pt

We denote by $\hh_{g,k}$ the moduli space of admissible covers constructed by Harris and Mumford \cite{HM} and studied further in  \cite{ACV}, or in the book \cite{BR}. It comes equipped with a finite \emph{branch map} $$\mathfrak{b}\colon \hh_{g,k}\rightarrow \mm_{0, 2g+2k-2}/\mathfrak{S}_{2g+2k-2},$$ where the target is the moduli space  of \emph{unordered} $(2g+2k-2)$-pointed stable rational curves, as well as with a map
$$\sigma\colon \hh_{g,k}\rightarrow \mm_g,$$ obtained by assigning to an admissible cover the stable model of its source. The effective cone of divisors of the quotient $\mm_{0, 2g+2k-2}/\mathfrak{S}_{2g+2k-2}$ is spanned by the divisor classes $\widetilde{B}_2, \ldots, \widetilde{B}_{g+k-1}$, where $\widetilde{B}_i$ denotes the closure of the locus of stable rational curves consisting of two components such that one of them contains precisely $i$ of the $2g+2k-2$ marked points.

\vskip 4pt

It is a fundamental question to describe the birational nature of $\hh_{g,k}$ and one knows much less  than in the case of other moduli spaces like $\mm_g$ or $\overline{\cA}_g$. Recall that in a series of landmark papers \cite{HM}, \cite{H}, \cite{EH2} published in the 1980s, Harris, Mumford and Eisenbud proved that $\mm_g$ is a variety of general type for $g>23$. This contrasts with the classical result of Severi \cite{Sev} that $\mm_g$ is unirational for $g\leq 10$. Later it has been shown that the Kodaira dimension of $\mm_g$ is negative for $g\leq 15$. Precisely, $\mm_g$ is unirational for $g\leq 14$, see \cite{CR1}, \cite{Ser},  \cite{Ve}, whereas $\mm_{15}$ is rationally connected \cite{BV}. It has been recently established in \cite{FJP} that both $\mm_{22}$ and $\mm_{23}$ are of general type.

\vskip 4pt

From Brill-Noether theory it follows that when $\frac{g+2}{2}\leq k\leq g$, every curve of genus $g$ can be represented as a $k$-sheeted cover of $\PP^1$, that is, the map $\sigma\colon \hh_{g,k}\rightarrow \mm_g$ is dominant. Using \cite{HT} it is easy to see that the generic fibre of the map $\sigma$ is of general type, thus in this range $\hh_{g,k}$ is of general type whenever $\mm_g$ is. When $k\geq g+1$, the Hurwitz space is obviously uniruled being a Grassmannian bundle. Classically it has been known that $\H_{g,k}$ is unirational for $k\leq 5$, see \cite{AC} and references therein for a modern treatment (or \cite{Pe} and \cite{Sch1} for an alternative treatment using the Buchsbaum-Eisenbud structure theorem of Gorenstein rings of codimension $3$).  Geiss \cite{G} using liaison techniques showed that most Hurwitz spaces $\H_{g,6}$ with $g\leq 45$ are unirational. Schreyer and Tanturri \cite{ST} put forward the hypothesis that there exist only finitely many pairs $(g,k)$, with $k\geq 6$, such that $\hh_{g,k}$ is \emph{not} unirational. At the other end, when $k=g$, the Hurwitz space $\hh_{g,g}$ is birationally isomorphic to the universal symmetric product of degree $g-2$ over $\mm_g$ and it has been showed in \cite{FV2} that $\hh_{g,g}$ is of general type for $g\geq 12$ and uniruled in all the other cases. Further results on the unirationality of some particular Hurwitz spaces are obtained in \cite{KT} and \cite{DS}.

\vskip 5pt

In this paper, we are particularly interested in the range $\frac{g+2}{2}\leq k\leq g-1$ (when $\hh_{g,k}$ dominates $\mm_g$) and $14\leq g\leq 19$, so that $\mm_g$ is either unirational/uniruled in the cases $g=14, 15$, or its Kodaira dimension is unknown, when $g=16, 17, 18, 19$. We summarize our results showing the positivity of the canonical bundle of $\hh_{g,k}$ and we begin with the cases when $\mm_g$ is known to be (or thought to be) uniruled.

\begin{theorem}\label{kodhurw1}
Suppose $\hh$ is one of the spaces  $\hh_{14,9}$ or $\hh_{16,9}$. Then  there exists an effective $\mathbb Q$-divisor class $E$ on $\hh$ supported on the union $\sum_{i^\geq 3} \mathfrak{b}^*(\widetilde{B}_i)$, such that the twisted canonical class $K_{\hh}+E$ is effective.
\end{theorem}

A few comments are in order. We expect that the boundary divisor $E$ is empty. Carrying this out requires an extension of the determinantal structure defining the Koszul divisors considered in this paper to the entire space $\hh_{g,k}$. This technically cumbersome step will be carried out in future work. Secondly, Theorem \ref{kodhurw1} is optimal. In genus $14$, Verra \cite{Ve} established the unirationality of  the Hurwitz space $\H_{14,8}$, which is a finite cover of $\cM_{14}$, and concluded in this way that $\cM_{14}$ itself is unirational. In genus $16$, the map $$\sigma\colon \hh_{16,9}\rightarrow \mm_{16}$$ is generically finite. Chang and Ran \cite{CR3} claimed that $\mm_{16}$ is uniruled (more precisely, that $K_{\mm_{16}}$ is not pseudo-effective, which by now, one knows that it implies uniruledness). However, it has become recently clear \cite{Ts} that their proof contains a fatal error, so the Kodaira dimension of $\mm_{16}$ is currently unknown.

\vskip 5pt

\begin{theorem}\label{kodhurw2}
Suppose $\hh$ is one of the spaces $\hh_{17,11}$ or $\hh_{19,13}$. Then there exists an effective $\mathbb Q$-divisor class $E$ on $\hh$ supported on the union $\sum_{i\geq 3} \mathfrak{b}^*(\widetilde{B}_i)$, such that  the twisted canonical class $K_{\hh}+E$ is big.
\end{theorem}

 We observe that the maps
$\hh_{17,11}\rightarrow \mm_{17}$ and $\hh_{19,13}\rightarrow \mm_{19}$ have generically $3$ and respectively $5$-dimensional fibres. As in the case of Theorem \ref{kodhurw1}, here too we expect the divisor $E$ to be empty.

\vskip 3pt

Both Theorems \ref{kodhurw1} and \ref{kodhurw2} are proven at the level of a partial compactification $\widetilde{\G}_{g,k}^1$ (described in detail in Section 2) and which incorporates only admissible covers with a source curve whose stable model is irreducible. In each relevant genus we produce an explicit effective divisor $\widetilde{\cD}_g$ on $\widetilde{\G}_{g,k}^1$  such that the canonical class $K_{\widetilde{\G}_{g,k}^1}$ can be expressed as a positive combination of a multiple of $[\widetilde{\cD}_g]$ and the pull-back under the map $\sigma$ of an effective divisor on $\mm_g$. Theorems \ref{kodhurw1} and \ref{kodhurw2} offer a very strong indication that the moduli spaces
$\hh_{14,9}, \hh_{16,9}, \hh_{17,11}$ and $\hh_{19,13}$ are all of general type. To complete such a proof one would need to show that the singularities of $\hh_{g,k}$ impose no \emph{adjunction conditions}, that is, every pluricanonical form on $\hh_{g,k}$ lifts to a pluricanonical form of a \emph{resolution of singularities} of $\hh_{g,k}$. Establishing such a result, which holds at the level of $\mm_g$, see \cite{HM}, or at the level of the moduli space of $\ell$-torsion points $\rr_{g,\ell}$ for small levels \cite{CF}, remains a significant challenge.

\vskip 5pt

We describe the construction of these divisors in two instances and refer to Section 2 for the remaining cases and further details. First we consider the space $\hh_{16,9}$ which is a generically finite cover of $\mm_{16}$.

\vskip 4pt

Let us choose a general
pair $[C,A]\in \H_{16,9}$. We set $L:=K_C\otimes A^{\vee}\in W^7_{21}(C)$. Denoting by $I_{C,L}(k):=\mbox{Ker}\bigl\{\mbox{Sym}^k H^0(C,L)\rightarrow H^0(C,L^{\otimes k})\bigr\}$, by Riemann-Roch one computes that $\mbox{dim } I_{C,L}(2)=9$ and $\mbox{dim } I_{C,L}(3)=72$. Therefore the locus where there exists a non-trivial syzygy, that is, the map
$$\mu_{C,L}\colon I_{C,L}(2)\otimes H^0(C,L)\rightarrow I_{C,L}(3)$$ is not an isomorphism is expected to be a divisor on $\H_{16,9}$. This indeed is the case and we define the \emph{syzygy divisor}
$$\cD_{16}:=\Bigl\{[C,A]\in \H_{16,9}: I_{C,K_C\otimes A^{\vee}}(2)\otimes H^0(C,K_C\otimes A^{\vee})\stackrel{\ncong}\rightarrow I_{C,K_C\otimes A^{\vee}}(3)\Bigr\}.$$ The ramification divisor of the map $\sigma\colon \H_{16,9}\rightarrow \cM_{16}$, viewed as the
\emph{Gieseker-Petri divisor}
$$\mathcal{GP}:=\Bigl\{[C,A]\in \H_{16,9}\colon  H^0(C,A)\otimes H^0(C,K_C\otimes A^{\vee})\stackrel{\ncong}\rightarrow H^0(C,K_C)\Bigr\}$$
also plays an important role. After computing the class of the closure $\widetilde{\cD}_{16}$ of the Koszul divisor $\cD_{16}$ inside the partial
compactification $\widetilde{\G}^1_{16,9}$, we find the following explicit representative for the canonical class
\begin{equation}\label{canrepre}
K_{\widetilde{\G}^1_{16,9}}=\frac{2}{5}[\widetilde{\cD}_{16}]+\frac{3}{5}[\widetilde{\mathcal{GP}}]\in CH^1(\widetilde{\G}^1_{16,9}).
\end{equation}
This proves Theorem \ref{kodhurw1} in the case $g=16$. We may wonder whether (\ref{canrepre}) is the only effective representative of $K_{\widetilde{\G}^1_{16,9}}$, which would imply that the Kodaira dimension of $\hh_{16,9}$ is equal to zero. To summarize, since the smallest Hurwitz cover of $\mm_{16}$ has an effective canonical class, it seems unlikely that the method of establishing the uniruledness/unirationality of $\cM_{14}$ and $\cM_{15}$ by studying a Hurwitz space covering it can be extended to higher genera $g\geq 17$.

\vskip 5pt

The last  case we discuss in this introduction is $g=17$ and $k=11$. We choose a general pair $[C,A]\in \H_{17,11}$. The residual linear system $L:=K_C\otimes A^{\vee}\in W^6_{21}(C)$ induces
an embedding $C\subseteq \PP^6$ of degree $21$. The multiplication map
$$\phi_L\colon \mbox{Sym}^2 H^0(C,L)\rightarrow H^0(C,L^{\otimes 2})$$
has a $2$-dimensional kernel. We impose the condition that the pencil of quadrics containing the image curve $C\stackrel{|L|}\hookrightarrow \PP^6$ be \emph{degenerate},
that is, it intersects the discriminant divisor in $\PP\bigl(\mbox{Sym}^2 H^0(C,L)\bigr)$ non-transversally. We thus define the locus
$$\cD_{17}:=\Bigl\{[C,A]\in \H_{17,11}: \PP\bigl(\mbox{Ker}(\phi_{K_C\otimes A^{\vee}})\bigr) \ \mbox{ is a degerate pencil}\Bigr\}.$$
Using \cite{FR}, we can compute the class $[\widetilde{\cD}_{17}]$ of the closure of $\cD_{17}$ inside $\widetilde{\G}_{17,11}$. Comparing this class to that of the canonical divisor, we obtain the relation
\begin{equation}\label{gen17}
K_{\widetilde{\G}_{17,11}^1}=\frac{1}{5}[\widetilde{\cD}_{17}]+\frac{3}{5}\sigma^*(7\lambda-\delta_0)\in CH^1(\widetilde{\G}^1_{17,11}).
\end{equation}
Since the class $7\lambda-\delta_0$ can be easily shown  to be big on $\mm_{17}$, the conclusion of Theorem \ref{kodhurw2} in the case $g=17$ now follows.

\section{The birational geometry of Hurwitz spaces}

We denote by  $\mathcal{H}_{g,k}^{\mathfrak o}$ the Hurwitz space classifying degree $k$ covers $f\colon C\rightarrow \PP^1$ with source being a smooth curve $C$ of genus $g$ and having simple ramifications. Note that we choose  an \emph{ordering} $(p_1, \ldots, p_{2g+2k-2})$ of the set of the branch points of $f$. An excellent reference for the algebro-geometric study of Hurwitz spaces is \cite{Ful}.
Let $\hh_{g,k}^{\mathfrak o}$ be the (projective) moduli space of admissible covers (with an ordering of the set of branch points). The geometry of $\hh_{g,k}^{\mathfrak o}$ has been described in detail by Harris and Mumford \cite{HM} and further clarified in \cite{ACV}. Summarizing their results, the stack $\overline{H}_{g,k}^{\mathfrak o}$ of admissible covers (whose coarse moduli space is precisely $\hh_{g,k}^{\mathfrak o}$) is isomorphic to the stack of \emph{twisted stable} maps into the classifying stack $\mathcal{B} \mathfrak{S}_{k}$ of the symmetric group $\mathfrak S_{k}$, that is, there is a canonical identification
$$\overline{H}_{g,k}^{\mathfrak o}:=\overline{M}_{0,2g+2k-2}\Bigl(\mathcal{B} \mathfrak S_{k}\Bigr).$$
Points of  $\hh_{g,k}^{\mathfrak o}$ can be thought of as admissible covers
$[f\colon C\rightarrow R, p_1, \ldots, p_{2g+2k-2}]$, where the source $C$ is a nodal curve of arithmetic genus $g$, the target $R$ is a tree of smooth rational curves, $f$ is a finite map of degree $k$ satisfying $f^{-1}(R_{\mathrm{sing}})=C_{\mathrm{sing}}$, and $p_1, \ldots, p_{2g+2k-2}\in R_{\mathrm{reg}}$
denote the branch points of $f$. Furthermore, the ramification indices on the two branches of $C$ at each ramification point of $f$ at a node of $C$ must coincide. One has a finite \emph{branch} morphism
$$\mathfrak{b}\colon \hh_{g,k}^{\mathfrak o}\rightarrow \mm_{0, 2g+2k-2},$$ associating to a cover its (ordered) branch locus. The symmetric group
$\mathfrak S_{2g+2k-2}$ operates on $\hh_{g,k}^{\mathfrak{o}}$ by permuting the branch points of each admissible cover. Denoting by
$$\hh_{g,k}:=\hh_{g,k}^{\mathfrak o}/\mathfrak S_{2g+2k-2}$$ the quotient parametrizing admissible covers \emph{without} an ordering of the branch points, we introduce the projection $q\colon \hh_{g,k}^{\mathfrak o} \rightarrow \hh_{g,k}$. Finally, let
$$\sigma\colon \hh_{g,k}\rightarrow \mm_{g}$$
be the map assigning to an admissible degree $k$ cover the stable model of its source curve, obtained by contracting unstable rational components.

\vskip 5pt

\subsection{Boundary divisors of $\hh_{g,k}$.}
We discuss the structure of the boundary divisors on the compactified Hurwitz space. For $i=0, \ldots, g+k-1$, let $B_i$ be the boundary divisor of $\mm_{0,2g+2k-2}$, defined as the closure of the locus of unions of two smooth rational curves meeting at one point, such that precisely $i$ of the marked points lie on one component and $2g+2k-2-i$ on the remaining one. To specify a  boundary divisor of $\hh_{g,k}^{\mathfrak o}$, one needs  the following combinatorial information:
\begin{enumerate}
\item A partition $I\sqcup J=\{1,\dotsc,2g+2k-2\}$, such that $|I|\geq 2$ and
$|J|\geq 2$.
\item Transpositions $\{w_i\}_{i\in I}$ and $\{w_j\}_{j\in J}$ in $\mathfrak S_k$, satisfying $$\prod_{i\in I} w_i = u, \ \ \prod_{j\in J}w_j=u^{-1},$$ for some
permutation $u\in \mathfrak S_k$.
\end{enumerate}

To this data (which is determined up to conjugation with an element from $\mathfrak{S}_k$), we associate the locus of admissible covers of degree $k$ with labeled branch points
$$\bigl[f\colon C\rightarrow R, \ p_1, \ldots, p_{2g+2k-2}\bigr]\in \hh_{g,k}^{\mathfrak o},$$
where $[R=R_1\cup_p R_2, p_1, \ldots, p_{2g+2k-2}]\in B_{|I|}\subseteq \mm_{0,2g+2k-2}$ is a pointed union of two smooth rational curves $R_1$ and $R_2$ meeting at the point $p$. The marked points indexed by $I$ lie on $R_1$, those indexed by $J$ lie on $R_2$. Let $\mu:=(\mu_1, \ldots, \mu_{\ell})\vdash k$ be the partition corresponding to the conjugacy class of $u\in \mathfrak S_k$. We denote by $E_{i:\mu}$ the boundary divisor on $\hh_k^{\mathfrak o} $ classifying twisted stable maps with underlying admissible cover as above, with $f^{-1}(p)$ having partition type $\mu$, and exactly $i$ of the points $p_1, \ldots, p_{2g+2k-2}$ lying on the component $R_1$. Passing to the unordered Hurwitz space $\hh_{g,k}$, we denote by $D_{i:\mu}$ the image $E_{i:\mu}$ under the map $q$, with its reduced structure. In general, both $D_{i:\mu}$ and $E_{i:\mu}$ may well be reducible.

The effect of the  map $\mathfrak{b}$ on boundary divisors is summarized in the following relation that holds for $i=2, \ldots, g+k-1$, see \cite{HM} page 62, or \cite{GK1} Lemma 3.1:
\begin{equation}\label{pb}
\mathfrak{b}^*(B_i)=\sum_{\mu\vdash k} \mbox{lcm}(\mu)E_{i:\mu}.
\end{equation}

\vskip 5pt

The Hodge class on the Hurwitz space is by definition pulled back from $\mm_g$. Its class $\lambda:=(\sigma\circ q)^*(\lambda)$ on $\hh_{g,k}^{\mathfrak o}$ has been determined in \cite{KKZ}, or see \cite{GK1} Theorem 1.1 for an algebraic proof. Remarkably, unlike on $\mm_g$, the Hodge class is always a boundary class:
\begin{equation}\label{hodgecl}
\lambda=\sum_{i=2}^{g+k-1} \sum_{\mu\vdash k} \mathrm{lcm}(\mu)\Bigl(\frac{i(2g+2k-2-i)}{8(2g+2k-3)}-\frac{1}{12}\Bigl(k-\sum_{j=1}^{\ell(\mu)} \frac{1}{\mu_j}\Bigr)\Bigr)[E_{i:\mu}]\in CH^1(\hh_k^{\mathfrak{o}}).
\end{equation}

The sum (\ref{hodgecl}) runs  over  partitions $\mu$ of $k$ corresponding to conjugacy classes of  permutations that can be written as products of $i$ transpositions. In the formula (\ref{hodgecl}) $\ell(\mu)$ denotes the length of the partition $\mu$.  The only negative coefficient in the expression of $K_{\mm_{0,2g+2k-2}}$ is that of the boundary divisor $B_2$. For this reason, the components of $\mathfrak{b}^*(B_2)$ play a special role, which we now discuss. We pick an admissible cover $$[f\colon C=C_1\cup C_2\rightarrow R=R_1\cup_p R_2,\  p_1, \ldots, p_{2g+2k-2}]\in \mathfrak{b}^*(B_2),$$ and set $C_1:=f^{-1}(R_1)$ and $C_2:=f^{-1}(R_2)$ respectively. Note that $C_1$ and $C_2$ may well be disconnected. Without loss of generality, we assume $I=\{1, \ldots, 2g+2k-4\}$, thus $p_1, \ldots, p_{2g+2k-4}\in R_1$ and $p_{2g+2k-3}, p_{2g+2k-2}\in R_2$.

\vskip 4pt

Let $E_{2:(1^k)}$ be the closure in $\hh_{g,k}^{\mathfrak o}$ of the locus of admissible covers such that the transpositions $w_{2g+2k-3}$ and $w_{2g+2k-2}$ describing the local monodromy in a neighborhood of the branch points $p_{2g+2k-3}$ and $p_{2g+2k-2}$ respectively, are equal. Let $E_0$ further denote the subdivisor of $E_{2:(1^k)}$ consisting of those admissible cover for which the subcurve $C_1$ is connected. This is the case precisely when
$\langle w_1, \ldots, w_{2g+2k-4}\rangle =\mathfrak S_k$.  Note that $E_{2:(1^k)}$ has many components not contained in $E_0$, for instance when $C_1$ splits as the disjoint union of a smooth rational curve mapping isomorphically onto $R_1$ and a second component mapping with degree $k-1$ onto $R_1$.

\vskip 3pt

When the permutations $w_{2g+2k-3}$ and $w_{2g+2k-2}$ are distinct but share one element in their orbit, then $\mu=(3,1^{k-3})\vdash k$ and the corresponding boundary divisor is denoted by $E_{2: (3,1^{k-3})}$. Let $E_3$ be the subdivisor of $E_{2:(3,1^{k-3})}$ corresponding to admissible covers with  $\langle w_1, \ldots, w_{2g+2k-4}\rangle =\mathfrak S_k$, that is, $C_1$ is a connected curve. Finally, in the case when $w_{2g+2k-3}$ and $w_{2g+2k-2}$ are disjoint transpositions, we obtain the boundary divisor $E_{2:(2,2,1^{k-4})}$. Similarly to the previous case, we denote by $E_2$ the subdivisor of $E_{2:(2,2,1^{k-4})}$ consisting of admissible covers  for which  $\langle w_1, \ldots, w_{2g+2k-4}\rangle =\mathfrak S_k$.

\vskip 4pt

We denote by $D_0, D_2$ and $D_3$ the push-forward of $E_0, E_2$ and $E_3$ respectively under the map $q$. Using a variation of Clebsch's original argument \cite{Cl} for establishing the irreducibility of $\hh_{g,k}$, it is easy to establish that $D_0$, $D_2$ and $D_3$ are all irreducibile, see also \cite{GK3} Theorem 6.1. The boundary divisors $E_0, E_2$ and $E_3$, when pulled-back under the quotient map $q\colon \hh_{g,k}^{\mathfrak{o}}\rightarrow \hh_{g,k}$, verify the following formulas
$$q^*(D_0)=2E_0, \ q^*(D_2)=E_2 \ \mbox{ and } q^*(D_3)=2E_3,$$
which we now explain. The general point of both $E_0$ and $E_3$ has no automorphism that fixes all branch points, but admits an automorphism of order two that fixes $C_1$ and permutes the branch points $p_{2g+2k-3}$ and $p_{2g+2k-2}$. The general admissible cover  in $E_2$ has an automorphism group $\mathbb Z_2\times \mathbb Z_2$ (each of the two components of $C_2$ mapping $2:1$ onto $R_2$ has an automorphism of order $2$). In the stack $\overline{H}_{g,k}^{\mathfrak o}$ we have two points lying over this admissible cover and each of them has an automorphism group of order $2$. In particular the map $\overline{H}_{g,k}^{\mathfrak o}\rightarrow \hh_{g,k}^{\mathfrak o}$ from the stack to the coarse moduli space is ramified with ramification index $1$ along the divisor $E_2$.

\vskip 4pt

One applies now the Riemann-Hurwitz formula to the map $\mathfrak{b}\colon \hh_{g,k}^{\mathfrak o}\rightarrow \mm_{0,2g+2k-2}$. Recall also that the canonical bundle of the moduli space of pointed rational curves is given by the formula
$$K_{\mm_{0,2g+2k-2}}= \sum_{i=2}^{g+k-1} \Bigl(\frac{i(2g+2k-2-i)}{2g+2k-3}-2\Bigr)[B_i].$$ All in all, we obtain the following formula for the canonical class of the Hurwitz stack:
\begin{equation}\label{canhur}
K_{\overline{H}_{g,k}^{\mathfrak o}}=\mathfrak{b}^*K_{\mm_{0, 2g+2k-2}}+\mbox{Ram}(\mathfrak{b}),
\end{equation}
where $\mbox{Ram}(\mathfrak{b})=\sum_{i, \mu\vdash k} (\mathrm{lcm}(\mu)-1)[E_{i:\mu}]$.

\vskip 4pt

\subsection{A partial compactification of $\H_{g,k}$.}
Like in the paper \cite{FR}, we work on a partial compactification of $\H_{g,k}$, for the Koszul-theoretic calculations are difficult to extend over all the boundary divisors $D_{i:\mu}$. The partial compactification of the Hurwitz space we consider is defined in  the same spirit as in the part of the paper devoted to divisors on $\mm_{g}$. We fix an integer
$k\leq \frac{2g+4}{3}$. Then $\rho(g,2,k)<-2$ and using \cite{EH2} it follows then that the locus of curves $[C]\in \cM_g$ such that $W^2_k(C)\neq \emptyset$ has codimension at least $2$ in $\cM_g$.  We denote by $\widetilde{\G}_{g,k}^1$ the space of pairs $[C,A]$, where $C$ is an irreducible nodal curve of genus $g$ satisfying $W^2_k(C)=\emptyset$ and $A$ is a base point free locally free sheaf of rank $1$ and degree $k$ on $C$ with $h^0(C,A)=2$. The rational map $$\hh_{g,k}\dashrightarrow \widetilde{\G}_{g,k}^1$$ is of course regular outside a subvariety of $\hh_{g,k}$ of codimension at least $2$, but  can be made explicit over the boundary divisors $D_0, D_2$ and $D_3$, which we now explain.

\vskip 3pt

Retaining the previous notation, to the general point $[f\colon C_1\cup C_2\rightarrow R_1\cup_p R_2]$ of $D_3$ (respectively $D_2$), we assign the pair $[C_1, A_1:=f^*\OO_{R_1}(1)]\in \widetilde{\G}_k^1$. Note that $C_1$ is a smooth curve of genus $g$ and $A_1$ is a pencil on $C_1$ having a triple point (respectively two ramification points in the fibre over $p$).  The spaces $\H_{g,k}\cup D_0\cup D_2\cup D_3$ and $\widetilde{\G}_{g,k}^1$ differ outside a set of codimension at least $2$ and for divisor class calculations they will be identified. Using this, we copy the formula (\ref{hodgecl}) at the level of the parameter space $\widetilde{\G}_{g,k}^1$ and obtain:
\begin{equation}\label{hodgepart}
\lambda=\frac{g+k-2}{4(2g+2k-3)}[D_0]-\frac{1}{4(2g+2k-3)}[D_2]+\frac{g+k-6}{12(2g+2k-3)}[D_3]\in CH^1(\widetilde{\G}_{g,k}^1).
\end{equation}

We now observe that the canonical class of $\widetilde{\G}_k^1$ has a simple expression in terms of the Hodge class $\lambda$ and the boundary divisors $D_0$ and $D_3$. Quite remarkably, this formula is independent of both $g$ and $k$!

\begin{theorem}\label{canpart2}
The canonical class of the partial compactification $\widetilde{\G}_{g,k}^1$ is given by
$$K_{\widetilde{\G}_{g,k}^1}=8\lambda+\frac{1}{6}[D_3]-\frac{3}{2}[D_0].$$
\end{theorem}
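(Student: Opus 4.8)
The plan is to obtain the formula by descending the canonical class of the \emph{ordered} Hurwitz stack $\overline{H}_{g,k}^{\mathfrak o}$, as computed in (\ref{canhur}), down to the coarse unordered partial compactification $\widetilde{\G}_{g,k}^1$, and then to trade the resulting purely boundary expression for one involving the Hodge class by means of (\ref{hodgepart}). Since $\widetilde{\G}_{g,k}^1$ agrees in codimension one with $\H_{g,k}\cup D_0\cup D_2\cup D_3$, the first point to establish is that only the three components $E_0,E_2,E_3$ lying over $B_2\subseteq\mm_{0,2g+2k-2}$ can contribute: every other component of $\mathfrak{b}^*(B_i)$ either parametrizes covers whose stable source curve is reducible (hence is discarded in $\widetilde{\G}_{g,k}^1$) or meets the relevant locus in codimension $\geq 2$. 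In particular, since $B_2$ is the unique boundary divisor of $\mm_{0,2g+2k-2}$ with negative coefficient in its canonical class, the surviving contributions all come from $B_2$.

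Next I would compute the coefficients of $E_0,E_2,E_3$ in $K_{\overline{H}_{g,k}^{\mathfrak o}}$. Writing $N:=2g+2k-3$, the $B_2$-coefficient of $K_{\mm_{0,2g+2k-2}}$ equals $\frac{2(N-1)}{N}-2=-\frac{2}{N}$, while by the pull-back formula (\ref{pb}) the three components occur in $\mathfrak{b}^*(B_2)$ with multiplicities $\mathrm{lcm}(1^k)=1$, $\mathrm{lcm}(2,2,1^{k-4})=2$ and $\mathrm{lcm}(3,1^{k-3})=3$. Adding the ramification term $\mathrm{Ram}(\mathfrak{b})=\sum(\mathrm{lcm}(\mu)-1)[E_{i:\mu}]$ from (\ref{canhur}) then gives the coefficients $-\frac{2}{N}$, $\,1-\frac{4}{N}$ and $2-\frac{6}{N}$ of $E_0,E_2,E_3$ respectively.

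The heart of the argument, and the step I expect to be the main obstacle, is the passage from these stacky coefficients to the coefficients $\alpha_0,\alpha_2,\alpha_3$ of $D_0,D_2,D_3$ in $K_{\widetilde{\G}_{g,k}^1}$, which demands careful bookkeeping of the order-two automorphisms along each divisor. Along $E_0$ and $E_3$ the ordered admissible cover has trivial automorphism group (the order-two symmetry swaps the marked branch points $p_{2g+2k-3},p_{2g+2k-2}$, so it is not an automorphism of the \emph{ordered} cover), whence the stack and its coarse space coincide there and the only correction is the ramification of the ordering quotient $q$, recorded in $q^*(D_0)=2E_0$ and $q^*(D_3)=2E_3$. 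Along $E_2$, by contrast, $q$ is unramified ($q^*(D_2)=E_2$), but the generic point carries an order-two stabilizer acting as a reflection on the normal bundle, so the stack-to-coarse map contributes a Riemann--Hurwitz correction of the same numerical shape (this is the content of the ``ramification index $1$'' statement preceding the theorem, interpreted as a correction term $+[E_2]$). In all three cases this produces the single relation $(\text{coefficient in }K_{\overline{H}_{g,k}^{\mathfrak o}})=2\alpha_i+1$, yielding $\alpha_0=-\frac12-\frac1N$, $\alpha_2=-\frac2N$ and $\alpha_3=\frac12-\frac3N$. Verifying that the $E_2$-correction is \emph{exactly} $+[E_2]$, i.e.\ that the reflection acts faithfully on the normal direction, is the delicate point and must be justified from the local structure of the double cover $C_2\to R_2$ near the node.

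Finally I would substitute the Hodge relation (\ref{hodgepart}), using $g+k-2=\frac{N-1}{2}$ and $g+k-6=\frac{N-9}{2}$, to compute $K_{\widetilde{\G}_{g,k}^1}-8\lambda$. A direct check shows the $D_2$-coefficient cancels identically, the $D_0$-coefficient collapses to $-\frac32$ and the $D_3$-coefficient to $\frac16$, all three independently of $g$ and $k$. This gives the asserted universal formula $K_{\widetilde{\G}_{g,k}^1}=8\lambda+\frac16[D_3]-\frac32[D_0]$, the disappearance of the $g,k$-dependence serving as the arithmetic confirmation that the boundary coefficients and the Hodge class have been correctly normalized.
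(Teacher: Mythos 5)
Your proposal is correct and follows essentially the same route as the paper: descend $K_{\overline{H}_{g,k}^{\mathfrak o}}$ from (\ref{canhur}) through the quotient $q$ and the stack-to-coarse map, and then eliminate $[D_2]$ by means of (\ref{hodgepart}); writing $N=2g+2k-3$, your coefficients $\alpha_0=-\frac{1}{2}-\frac{1}{N}$, $\alpha_2=-\frac{2}{N}$, $\alpha_3=\frac{1}{2}-\frac{3}{N}$ are exactly what one obtains by expanding $8\lambda+\frac{1}{6}[D_3]-\frac{3}{2}[D_0]$ via (\ref{hodgepart}), so the final substitution checks out. The one place where your bookkeeping differs from the paper's is the normalization along $E_2$: the paper records the $E_2$-coefficient of $q^*K_{\widetilde{\G}_{g,k}^1}$ as $-\frac{4}{N}$ and then implicitly treats $[D_2]$ as pulling back to $2[E_2]$ (reflecting the generic $\mathbb{Z}_2$-stabilizer on the ordered stack), whereas you fold that factor of $2$ into the uniform relation $2\alpha_i+1$; the two conventions eliminate $[D_2]$ to the same answer, and your identification of this factor-of-two as the delicate step is exactly right.
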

\begin{proof} We combine the equation (\ref{canhur}) with the Riemann-Hurwitz formula applied to the quotient $q\colon \hh_{g,k}^{\mathfrak o}\dashrightarrow \widetilde{\G}_{g,k}^1$ and write:
$$q^*(K_{\widetilde{\G}_{g,k}^1})=K_{\overline{H}_{g,k}^{\mathfrak o}}-[E_0]-[E_2]-[E_3]=
$$
$$-\frac{4}{2g+2k-3}[D_2]-\frac{2g+2k-1}{2g+2k-3}[D_0]+\frac{2g+2k-9}{2g+2k-3}[D_3].$$

To justify this formula, observe that the  divisors $E_0$ and $E_3$ lie in the ramification locus of $q$, hence they have to subtracted from $K_{\overline{H}_{g,k}^{\mathfrak 0}}$. The morphism $\overline{H}_{g,k}^{\mathfrak{o}}\rightarrow \hh_{g,k}^{\mathfrak o}$ from the stack to the coarse moduli space is furthermore simply ramified along $E_2$, so this divisor has to be subtracted as well.
We now use (\ref{hodgepart}) to express  $[D_2]$ in terms of $\lambda$, $[D_0]$ and $[D_3]$ and obtain that $q^*(K_{\widetilde{\G}^1_{g,k}})=8\lambda+\frac{1}{3} [E_3]-3[E_0]$, which yields the claimed formula.
\end{proof}

\vskip 4pt

Let $f\colon \cC\rightarrow \widetilde{\G}_{g,k}^1$ be the universal curve and we choose a degree $k$ Poincar\'e line bundle $\mathcal{L}$ on $\cC$ (or on an \'etale cover if necessary). Along the lines of \cite{FR} Section 2 (where only the case $g=2k-1$ has been treated, though the general situation is analogous), we introduce two  tautological codimension one classes:
$$\mathfrak{a}:=f_*\bigl(c_1^2(\mathcal{L})\bigr) \mbox{ } \mbox{and  } \mbox{ } \mathfrak{b}:=f_*\bigl(c_1(\mathcal{L})\cdot c_1(\omega_f)\bigr)\in  CH^1(\widetilde{\G}_{g,k}^1).$$
The push-forward sheaf $\mathcal{V}:=f_*\mathcal{L}$ is locally free of rank $2$ on $\widetilde{\G}_{g,k}^1$ (outside a subvariety of codimension at least $2$). Its fibre at a point $[C,A]$ is canonically identified with $H^0(C,A)$. Although  $\mathcal{L}$ is not unique, an easy exercise involving first Chern classes, convinces us that the class
\begin{equation}\label{gamma}
\gamma:=\mathfrak b-\frac{g-1}{k}\mathfrak a \in CH^1(\widetilde{\G}_{g,k}^1)
\end{equation}
does not depend of the choice of a Poincar\'e bundle.
\begin{proposition}\label{basepoint}
We have that $\mathfrak{a}=kc_1(\cV)\in CH^1(\widetilde{\G}_{g,k}^1)$.
\end{proposition}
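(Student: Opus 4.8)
The plan is to exploit the base-point-freeness of the pencil $A$ directly at the level of the universal curve $f:\cC\rightarrow \widetilde{\G}_{g,k}^1$. Since $\cV=f_*\mathcal{L}$ has rank $2$ with fibre $H^0(C,A)$ over a point $[C,A]$, and since every such $A$ is base point free by the definition of $\widetilde{\G}_{g,k}^1$, the evaluation morphism $\mathrm{ev}:f^*\cV\rightarrow \mathcal{L}$ is fibrewise surjective, hence surjective as a map of sheaves on $\cC$. Its kernel $\mathcal{M}:=\ker(\mathrm{ev})$ is then locally free of rank one, and we obtain the fundamental short exact sequence
$$0\longrightarrow \mathcal{M}\longrightarrow f^*\cV\longrightarrow \mathcal{L}\longrightarrow 0$$
on $\cC$. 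Establishing this sequence, in particular the global surjectivity of $\mathrm{ev}$ and that $\mathcal{M}$ is a line bundle, is really the only geometric input; everything else is a Chern class manipulation. (If $\mathcal{L}$ only exists after an \'etale base change, one runs the whole argument there and descends the resulting identity.)

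From this sequence one reads off $c_1(f^*\cV)=c_1(\mathcal{M})+c_1(\mathcal{L})$ together with $f^*c_2(\cV)=c_1(\mathcal{M})\cdot c_1(\mathcal{L})$. Eliminating $c_1(\mathcal{M})$ yields the key identity
$$c_1^2(\mathcal{L})=f^*c_1(\cV)\cdot c_1(\mathcal{L})-f^*c_2(\cV)\in CH^2(\cC).$$
I would then apply $f_*$ and use the projection formula term by term. For the first term, $f_*\bigl(f^*c_1(\cV)\cdot c_1(\mathcal{L})\bigr)=c_1(\cV)\cdot f_*\bigl(c_1(\mathcal{L})\bigr)$, and since $c_1(\mathcal{L})$ restricts to $c_1(A)$ of degree $k$ on each fibre, $f_*\bigl(c_1(\mathcal{L})\bigr)=k$ in $CH^0(\widetilde{\G}_{g,k}^1)$; this term therefore contributes $k\,c_1(\cV)$.

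The only mild subtlety I expect is the vanishing $f_*\bigl(f^*c_2(\cV)\bigr)=0$. This holds because $c_2(\cV)\in CH^2(\widetilde{\G}_{g,k}^1)$ is pulled back from the base, so $f^*c_2(\cV)$ is supported on $f^{-1}$ of a codimension-two cycle, whose image under the relative-dimension-one map $f$ has strictly smaller dimension than the cycle itself, forcing the pushforward to vanish; equivalently, by the projection formula $f_*(f^*c_2(\cV))=c_2(\cV)\cdot f_*(1)$ with $f_*(1)\in CH^{-1}(\widetilde{\G}_{g,k}^1)=0$. Combining the two computations gives $\mathfrak{a}=f_*\bigl(c_1^2(\mathcal{L})\bigr)=k\,c_1(\cV)$, as claimed. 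The points needing care are precisely the global surjectivity of the evaluation map and this codimension bookkeeping for the pushforward, both of which are standard once the exact sequence is in place.
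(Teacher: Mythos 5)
Your argument is correct and is essentially the paper's: the proof in the text invokes the Porteous formula for the evaluation map $f^*\cV\rightarrow \mathcal{L}$, whose degeneracy locus is empty precisely because every $A$ parametrized by $\widetilde{\G}_{g,k}^1$ is base point free, and in that situation Porteous reduces exactly to the Whitney-formula computation you perform on the short exact sequence $0\rightarrow \mathcal{M}\rightarrow f^*\cV\rightarrow \mathcal{L}\rightarrow 0$. Your two bookkeeping steps, $f_*\bigl(c_1(\mathcal{L})\bigr)=k$ and $f_*\bigl(f^*c_2(\cV)\bigr)=0$, are both correct, so the identity $\mathfrak{a}=k\,c_1(\cV)$ follows as you state.
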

\begin{proof} Simple application of the Porteous formula in the spirit of Proposition 11.2 in \cite{FR}.
\end{proof}

The following locally free sheaves on $\widetilde{\G}_{g,k}^1$ will play an important role in several Koszul-theoretic calculations:
$$\E:=f_*(\omega_f\otimes \L^{-1}) \ \mbox{ and } \ \F_{\ell}:=f_*(\omega_f^{\ell}\otimes \L^{-\ell}),$$
where $\ell\geq 2$.
\begin{proposition}\label{chernhurw1}
The following formulas hold
$$c_1(\E)=\lambda-\frac{1}{2}\mathfrak b+\frac{k-2}{2k} \mathfrak a \ \mbox{ and } \ c_1(\F_{\ell})=\lambda+\frac{\ell^2}{2}\mathfrak{a}-\frac{\ell(2\ell-1)}{2}\mathfrak{b}+{\ell \choose 2}\bigl(12\lambda-[D_0]\bigr).$$
\end{proposition}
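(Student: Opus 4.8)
The plan is to apply Grothendieck--Riemann--Roch to the universal curve $f:\cC\rightarrow \widetilde{\G}_{g,k}^1$ for the two line bundles $\omega_f\otimes \L^{-1}$ and $\omega_f^{\ell}\otimes \L^{-\ell}$, and to extract the degree-one part of the resulting push-forward on the base. Throughout I would abbreviate $\omega:=c_1(\omega_f)$ and $L:=c_1(\L)$, so that by definition $f_*(L^2)=\mathfrak a$ and $f_*(\omega\cdot L)=\mathfrak b$, while Mumford's relation on the partial compactification reads $f_*(\omega^2)=12\lambda-[D_0]$ (only $D_0$ enters, the remaining boundary of $\widetilde{\G}_{g,k}^1$ having codimension at least two). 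The first thing to pin down is the cohomological nature of the two push-forwards along the fibres. For a point $[C,A]$ the restriction of $\omega_f^{\ell}\otimes \L^{-\ell}$ is $K_C^{\ell}\otimes A^{-\ell}$, whose $H^1$ is dual to $H^0\bigl(C,K_C^{1-\ell}\otimes A^{\ell}\bigr)$; this group vanishes since $K_C^{1-\ell}\otimes A^{\ell}$ has negative degree $(1-\ell)(2g-2)+\ell k<0$ for $\ell\geq 2$ (this is where $k\leq \frac{2g+4}{3}$ is used), and $C$ is irreducible even along $D_0$. Hence $R^1f_*\bigl(\omega_f^{\ell}\otimes\L^{-\ell}\bigr)=0$ and $\F_{\ell}=f_{!}\bigl(\omega_f^{\ell}\otimes \L^{-\ell}\bigr)$. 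For $\E$ the situation differs: $h^1(C,K_C\otimes A^{-1})=h^0(C,A)=2$ is constant, so $\E$ is locally free of rank $g+1-k$, and relative Serre duality gives $R^1f_*\bigl(\omega_f\otimes \L^{-1}\bigr)\cong \bigl(f_*\L\bigr)^{\vee}=\cV^{\vee}$.

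The computational engine is Riemann--Roch for a family of nodal curves. Since $\cC$ is smooth and $f$ has at worst nodal fibres, with node locus of codimension two mapping onto $D_0$, the correction at the nodes combines with $f_*(\omega^2)=12\lambda-[D_0]$ to yield, for every line bundle $\mathcal G$ on $\cC$, the clean identity in $CH^1(\widetilde{\G}_{g,k}^1)$
$$c_1\bigl(f_{!}\mathcal G\bigr)=\tfrac12\,f_*\Bigl(c_1(\mathcal G)\cdot\bigl(c_1(\mathcal G)-\omega\bigr)\Bigr)+\lambda.$$
I would justify this by writing out the degree-two part of $\mathrm{ch}(\mathcal G)\,\mathrm{td}(\omega_f^{\vee})=e^{c_1(\mathcal G)}\bigl(1-\tfrac{\omega}{2}+\tfrac{\omega^2}{12}+\cdots\bigr)$, namely $\tfrac12 c_1(\mathcal G)^2-\tfrac12 c_1(\mathcal G)\omega+\tfrac{1}{12}\omega^2$, pushing forward, and observing that the node correction $\tfrac{1}{12}[D_0]$ turns the pure Todd term $\tfrac{1}{12}f_*(\omega^2)=\tfrac{1}{12}(12\lambda-[D_0])$ into $\lambda$. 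The identity passes the two standard sanity checks: $\mathcal G=\OO_{\cC}$ gives $c_1(f_{!}\OO)=\lambda$ and $\mathcal G=\omega_f$ gives $c_1(f_{!}\omega_f)=\lambda$, both consistent with $f_{!}\omega_f=\mathbb E-\OO$ and $f_{!}\OO=\OO-\mathbb E^{\vee}$.

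It then remains to substitute. For $\F_{\ell}$, with $c_1(\mathcal G)=\ell\omega-\ell L$, the quadratic term expands to $(\ell^2-\ell)\omega^2+(\ell-2\ell^2)\omega L+\ell^2 L^2$, so that $c_1(\F_{\ell})={\ell\choose 2}f_*(\omega^2)-\tfrac{\ell(2\ell-1)}{2}\mathfrak b+\tfrac{\ell^2}{2}\mathfrak a+\lambda$; inserting $f_*(\omega^2)=12\lambda-[D_0]$ gives exactly the stated formula. For $\E$, with $c_1(\mathcal G)=\omega-L$, the $\omega^2$-terms cancel and one finds $c_1\bigl(f_{!}(\omega_f\otimes\L^{-1})\bigr)=\lambda-\tfrac12\mathfrak b+\tfrac12\mathfrak a$; since $f_{!}(\omega_f\otimes\L^{-1})=[\E]-[\cV^{\vee}]$ we obtain $c_1(\E)=\lambda-\tfrac12\mathfrak b+\tfrac12\mathfrak a-c_1(\cV)$, and Proposition \ref{basepoint} ($\mathfrak a=k\,c_1(\cV)$) converts $\tfrac12\mathfrak a-\tfrac{1}{k}\mathfrak a$ into $\tfrac{k-2}{2k}\mathfrak a$, yielding the first claim.

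The only genuinely delicate point is the boundary bookkeeping: one must track the $[D_0]$ contributions coming simultaneously from Mumford's relation $f_*(\omega^2)=12\lambda-[D_0]$ and from the node correction to Riemann--Roch, and confirm that they combine precisely as above rather than leaving a spurious multiple of $[D_0]$. This is exactly why the $[D_0]$-term survives (with coefficient ${\ell\choose 2}$) in $c_1(\F_{\ell})$ but cancels entirely in $c_1(\E)$; verifying this cancellation, and that the node locus pushes forward to $D_0$ with multiplicity one, is the step I would treat most carefully.
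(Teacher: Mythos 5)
Your proposal is correct and follows exactly the route the paper sketches: Grothendieck--Riemann--Roch along $f$, the vanishing $R^1f_*(\omega_f^{\ell}\otimes\L^{-\ell})=0$ for $\ell\geq 2$, relative Serre duality identifying $R^1f_*(\omega_f\otimes\L^{-1})$ with $\cV^{\vee}$, Mumford's relation $\kappa_1=12\lambda-[D_0]$, and Proposition \ref{basepoint} to convert $c_1(\cV)$ into $\frac{1}{k}\mathfrak a$; the paper merely omits these details by reference to \cite{FR}. Your expansions check out (in particular $(\ell\omega-\ell L)\bigl((\ell-1)\omega-\ell L\bigr)=\ell(\ell-1)\omega^2-\ell(2\ell-1)\omega L+\ell^2L^2$ and the cancellation of the $\omega^2$-term for $\E$), and the degree bound ensuring the $H^1$-vanishing indeed holds for all the genera $g\geq 14$ considered here.
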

\begin{proof} Use Grothendieck-Riemann-Roch applied to the universal curve $f$, coupled with Proposition \ref{basepoint} in order to evaluate the terms. Use that $R^1f^*(\omega_f^{\ell}\otimes \L^{-\ell})=0$ for $\ell\geq 2$. Similar to Proposition 11.3 in \cite{FR}, so we skip the details.
\end{proof}

We summarize the relation between the class $\gamma$ and the classes $[D_0], [D_2]$ and $[D_3]$ as follows. Again, we find it remarkable that this formula is independent of $g$ and $k$.

\begin{proposition}\label{trans}
One has the formula $[D_3]=6\gamma+24\lambda-3[D_0].$
\end{proposition}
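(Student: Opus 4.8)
The plan is to access the boundary divisor $[D_3]$ through the ramification of the universal pencil. The point is that the Grothendieck--Riemann--Roch computations underlying Propositions \ref{basepoint} and \ref{chernhurw1} only ever involve $\lambda$, $\mathfrak a$, $\mathfrak b$ and $[D_0]$; they cannot produce $D_3$, which is a condition on the pencil $A$ rather than on the source curve. So I would first introduce the relative ramification divisor $\mathcal R\subseteq\cC$ of the degree $k$ map $\pi\colon\cC\to\PP(\cV)$ determined by the universal pencil $\cV=f_*\L$, where $\pi^*\OO_{\PP(\cV)}(1)=\L$. The relative Hurwitz formula $\omega_f=\pi^*\omega_{\PP(\cV)/\widetilde{\G}^1_{g,k}}(\mathcal R)$ together with $\omega_{\PP(\cV)/\widetilde{\G}^1_{g,k}}=\OO(-2)\otimes\rho^*\det\cV$ yields the class
\[
[\mathcal R]=c_1(\omega_f)+2c_1(\L)-f^*c_1(\cV)\in CH^1(\cC),
\]
which restricts on a general fibre to the honest ramification divisor of $C\xrightarrow{|A|}\PP^1$, so that $f_*[\mathcal R]=2g+2k-2$ as required by Riemann--Hurwitz.

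Next I would carry out the tautological side. Using $\mathfrak a=f_*(c_1^2(\L))$, $\mathfrak b=f_*(c_1(\L)c_1(\omega_f))$, Mumford's relation $f_*(c_1^2(\omega_f))=12\lambda-[D_0]$ over this partial compactification, Proposition \ref{basepoint} in the form $c_1(\cV)=\tfrac1k\mathfrak a$, and the definition (\ref{gamma}) of $\gamma$, the projection formula gives $f_*\bigl([\mathcal R]\cdot c_1(\omega_f)\bigr)=2\gamma+12\lambda-[D_0]$ and $f_*\bigl([\mathcal R]^2\bigr)=4\gamma+12\lambda-[D_0]$. Adding these,
\[
f_*\bigl((c_1(\omega_f)+[\mathcal R])\cdot[\mathcal R]\bigr)=6\gamma+24\lambda-2[D_0].
\]
The spontaneous appearance of the combination $6\gamma+24\lambda$ already matches the right-hand side of the claimed identity, a strong indication that this is the correct route.

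Then I would compute the very same class geometrically. By adjunction on the smooth total space $\cC$ one has $(c_1(\omega_f)+[\mathcal R])|_{\mathcal R}=c_1(\omega_{\mathcal R/\widetilde{\G}^1_{g,k}})$, so the class above equals the branch divisor of the finite map $f_{\mathcal R}\colon\mathcal R\to\widetilde{\G}^1_{g,k}$ of degree $2g+2k-2$. I would identify this branch locus explicitly: over a general point of $D_3$ the pencil acquires a single triple point, and in the local model $y=x^3-tx$ the two simple ramification points collide, so two sheets of $\mathcal R$ merge and $f_{\mathcal R}$ is simply ramified, contributing $[D_3]$ with coefficient $1$; over $D_2$ the two ramification points share a fibre of the pencil but stay distinct on the source, so $f_{\mathcal R}$ is unramified and $D_2$ never appears, which is exactly why the final formula is free of $[D_2]$ and hence does not require (\ref{hodgepart}). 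Comparing the two evaluations yields $6\gamma+24\lambda-2[D_0]=[D_3]+c_0[D_0]$ for a universal constant $c_0$, and the assertion is equivalent to $c_0=1$.

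The hard part will be the node contribution over $D_0$, i.e.\ the determination of $c_0$. Near the node of a source curve parametrized by $D_0$ the pencil is base point free and the node is not an honest ramification point, yet both the relative dualizing sheaf $\omega_f$ and the Hurwitz formula for $\mathcal R$ need corrections there; pinning down the coefficient of $[D_0]$ is a careful local analysis reconciling the normalization $f_*(c_1^2(\omega_f))=12\lambda-[D_0]$ used on the tautological side with the adjunction computation on the geometric side. Since $c_0$ is universal, as a cross-check I would also test the identity on a pencil of covers with fixed smooth source $C$ moving inside $W^1_k(C)$, where $\lambda=[D_0]=0$ reduces the relation to $[D_3]\cdot T=6\,\gamma\cdot T$, with both sides computed by the Jacobian intersection theory of Section 1; this independently confirms the coefficient $6$ and the absence of $[D_2]$, while a family degenerating the source to an irreducible one-nodal curve isolates and forces $c_0=1$, completing the proof of the relation $[D_3]=6\gamma+24\lambda-3[D_0]$.
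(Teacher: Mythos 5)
Your route is genuinely different from the paper's: where the paper realizes $D_3$ (plus a copy of $D_0$) as the Porteous degeneracy locus of the second Taylor map $\nu_2\colon f^*(\cV)\rightarrow J_f^2(\L)$ and pushes forward $c_2\bigl(J_f^2(\L)/f^*(\cV)\bigr)=6\gamma+2\kappa_1$, you compute the branch divisor of the finite map $\mathcal R\rightarrow \widetilde{\G}^1_{g,k}$ from the universal ramification divisor. The tautological side of your argument is correct: I checked that $f_*\bigl([\mathcal R]\cdot c_1(\omega_f)\bigr)=2\gamma+\kappa_1$ and $f_*\bigl([\mathcal R]^2\bigr)=4\gamma+\kappa_1$, using $c_1(\cV)=\frac{1}{k}\mathfrak a$ and $\kappa_1=12\lambda-[D_0]$, so the sum is indeed $6\gamma+24\lambda-2[D_0]$, exactly the quantity the paper obtains as $f_*c_2\bigl(J_f^2(\L)/f^*(\cV)\bigr)$. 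The identification of the $D_3$-coefficient as $1$ via the local model $y=x^3-tx$, and the observation that $D_2$ contributes nothing because the two ramification points remain distinct on the source, are both correct and match what happens implicitly in the paper.

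The gap is that you never actually determine $c_0$. You correctly isolate the coefficient of $[D_0]$ as the delicate point, but then only describe two strategies for computing it ("a careful local analysis", "a family degenerating the source... forces $c_0=1$") without executing either; the conclusion $c_0=1$ is asserted, not proved. This is precisely the step the paper spends its explicit local computation on: it writes down the $2\times 3$ matrix $\bigl(\begin{smallmatrix}1&0&0\\ x+y&x-y&x+y\end{smallmatrix}\bigr)$ representing $\nu_2$ near a node $xy=t$ and reads off that $D_0$ enters the degeneracy locus with multiplicity exactly $1$. In your setup the analogous work is to show that near the node the scheme $\mathcal R$ (the vanishing of $\pi^*\omega_{\PP(\cV)/\widetilde{\G}^1_{g,k}}\rightarrow\omega_f$, locally cut out by $x-y$ inside $xy=t$) is smooth, that adjunction $\bigl(c_1(\omega_f)+[\mathcal R]\bigr)\big|_{\mathcal R}=c_1(\omega_{\mathcal R/\widetilde{\G}^1_{g,k}})$ is legitimate there, and that $\mathcal R\rightarrow\widetilde{\G}^1_{g,k}$, locally $t=x^2$, is simply ramified over $D_0$ — giving branch contribution exactly $[D_0]$ and hence $c_0=1$. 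None of this is in your write-up, and without it the formula $[D_3]=6\gamma+24\lambda-3[D_0]$ is not established; the proposed cross-check with a pencil in $W^1_k(C)$ has $\lambda=[D_0]=0$ and so by construction cannot see $c_0$ at all.
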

\begin{proof}
 We form the fibre product of the universal curve $f\colon \mathcal{C}\rightarrow \widetilde{\G}_{g,k}^1$ together with its projections:
$$
\begin{CD}
{\mathcal{C}} @<\pi_1<< {\mathcal C\times_{\widetilde{\G}_{g,k}^1}\mathcal{C}} @>\pi_2>> {\mathcal{C}} \\
\end{CD}.
$$
For $\ell\geq 1$, we consider the jet bundle $J_f^{\ell}(\L)$, which sits in an  exact sequence:
\begin{equation}\label{exseqjet}
0 \longrightarrow  \omega_f^{\otimes \ell}\otimes \mathcal{L} \longrightarrow J_f^{\ell}(\mathcal{L}) \longrightarrow       J_f^{\ell-1}(\mathcal{L}) \longrightarrow 0.
\end{equation}
Here $J_f^{\ell}(\L):=\bigl(\mathcal{P}_f^{\ell}(\L)\bigr)^{\vee \vee}$ is the \emph{reflexive closure} of the sheaf of principal parts
defined as $\mathcal{P}_f^{\ell}(\L):=(\pi_2)_* \bigl(\pi_1^*(\L)\otimes \mathcal{I}_{(\ell+1)\Delta}\bigr)$, where $\Delta \subseteq \mathcal C\times_{\widetilde{\G}_{g,k}^1}\mathcal{C}$ is the diagonal.

\vskip 3pt

One has a sheaf morphism $\nu_{2}\colon f^*(\cV)\rightarrow J_f^{2}(\L)$, which we think of as the \emph{second Taylor map} associating to a section its first two derivatives. For points $[C,A,p]\in \mathcal{C}$ such that $p\in C$ is a smooth point,  this map is simply the evaluation $H^0(C,A)\rightarrow H^0(A\otimes \OO_{3p})$. Let $Z\subseteq \mathcal{C}$ be the locus where $\nu_2$ is not injective. Over the locus of smooth curves, $D_{3}$ is the set-theoretic image of $Z$. A local analysis that we shall present shows that $\nu_2$ is degenerate with multiplicity $1$ at a point $[C,A,p]$, where $p\in C_{\mathrm{sing}}$. Thus, $D_0$ is to be found with multiplicity $1$ in the degeneracy locus of $\nu_2$. The  Porteous formula leads to:
$$
[D_3]= f_* c_2\Bigl(\frac{J_f^2(\mathcal{L})}{f^*(\cV)}\Bigr)-[D_{0}]\in CH^1(\widetilde{\G}^1_{g,k}).
$$

As anticipated, we now show that $D_0$ appears with multiplicity $1$ in the degeneracy locus of $\nu_2$. To that end, we choose a family $F:X \rightarrow B$ of genus $g$ curves of genus over a smooth $1$-dimensional base $B$, such that $X$ is smooth, and there is a point $b_0\in B$ with $X_b:=F^{-1}(b)$ is smooth for $b\in B\setminus \{b_0\}$, whereas $X_{b_0}$ has a unique node $u\in X$. Assume furthermore  that $A\in \mbox{Pic}(X)$ is a line bundle such that $A_b:=L_{|X_b}\in W^1_k(X_b)$, for each $b\in B$.  We further choose a local parameter $t\in \mathcal{O}_{B, b_0}$ and $x, y\in \mathcal{O}_{X,u}$, such that $xy=t$ represents the local equation of $X$ around the point $u$. Then $\omega_F$ is locally generated by the meromorphic differential $\tau$ that is given by $\frac{dx}{x}$ outside the divisor $x=0$ and by $-\frac{dy}{y}$ outside the divisor $y=0$. Let us pick sections $s_1, s_2\in H^0(X,A)$, where $s_1(u)\neq 0$, whereas $s_2$ vanishes with order $1$ at the node $u$ of $X_{b_0}$, along  both its branches. Passing to germs of functions at $u$, we have the relation $s_{2,u}=(x+y)s_{1,u}$. Then by direct calculation in local
coordinates, the map $H^0\bigl(X_{b_0}, A_{b_0}\bigr) \rightarrow H^0\bigl(X_{b_0}, A_{b_0|3u}\bigr)$ is given by the $2\times 2$ minors of the following matrix:
$$\begin{pmatrix}
1 & 0& 0\\
x+y & x-y & x+y \\
\end{pmatrix}.
$$
We conclude that $D_0$ appears with multiplicity $1$ in the degeneracy locus of $\nu_2$.

\vskip 4pt

From the exact sequence (\ref{exseqjet}) one computes $c_1(J_f^2(\L))=3c_1(\L)+3c_1(\omega_f)$ and $c_2(J_f^2(\L))=c_2(J_f^1(\L))+c_1(J_f^1(\L))\cdot c_1(\omega_f^{\otimes 2}\otimes \L)=3c_1^2(\L)+6c_1(\L)\cdot c_1(\omega_f)+2c_1^2(\omega_f)$.
Substituting, we find after routine calculations that
$$f_* c_2\left(\frac{J_f^2(\mathcal{L})}{f^*(\cV)}\right)=6\gamma+2\kappa_1,$$
where  $\kappa_1=f_*(c_1(\omega_f)^{\otimes 2})$.
Using Mumford's formula $\kappa_1=12\lambda-[D_0]\in CH^1(\widetilde{\G}_{g,k}^1)$, see e.g. \cite{HM} top of page 50, we finish the proof.
\end{proof}

\section{Effective divisors on Hurwitz spaces when $14\leq g\leq 19$}

We now describe the construction of four effective divisors on particular Hurwitz spaces interesting in moduli theory. The divisors in question are of syzygetic nature and
resemble somehow the divisors on $\mm_g$ used recently in \cite{FJP} to show that both $\mm_{22}$ and $\mm_{23}$, as well as those used earlier in \cite{F2} and \cite{F3} to disprove the Slope Conjecture formulated in \cite{HMo}.  The divisors on $\hh_{g,k}$ that we consider
are defined directly in terms of a general element $[C,A]\in \H_{g,k}$ (where $A\in W^1_k(C)$), without making reference to other attributes of the curve $C$. This simplifies both the task of computing their classes and showing that the respective codimension $1$ conditions in moduli lead to genuine divisor on $\H_{g,k}$. Using the irreducibility of $\H_{g,k}$, this amounts to exhibiting \emph{one} example of a point $[C,A]\in \H_{g,k}$ outside the divisor, which can be easily achieved with the use of
\emph{Macaulay}. This is in sharp contrast to the situation in \cite{FJP}, where establishing the transversality statement ensuring that the virtual divisor on the moduli spaces in question are actual divisors turns out to be a major challenge. We mention finally also the papers  \cite{GK2}, \cite{DP}, where one studies other interesting divisors  in Hurwitz spaces using the splitting type of the $(k-1)$-dimensional scroll canonically associated to a degree $k$ cover $C\rightarrow \PP^1$.

\vskip 3pt

\subsection{The Hurwitz space $\H_{14,9}$.} We consider the morphism $\sigma\colon \hh_{14,9}\rightarrow \mm_{14}$, whose general fibre is $2$-dimensional. We choose a general element  $[C,A]\in \H_{14,9}$ and set
$L:=K_C\otimes A^{\vee}\in W^5_{17}(C)$ to be the residual linear system. Furthermore, $L$ is very ample, else there exist points $x,y\in C$ such that $A(x+y) \in W^2_{13}(C)$, which contradicts the Brill-Noether Theorem. Note that
$$h^0(C,L^{\otimes 2})=\mbox{dim } \mbox{Sym}^2 H^0(C,L)=21$$ and we set up the (a priori virtual) divisor
$$\widetilde{\cD}_{14}:=\Bigl\{[C,A]\in \widetilde{\G}^1_{14,9}:  \mbox{Sym}^2 H^0(C,L)\stackrel{\phi_L}\longrightarrow H^0(C,L^{\otimes 2}) \mbox{ is not an isomorphism}\Bigr\}.$$

Our next results shows that, remarkably, this locus is indeed a divisor  and it gives rise to an  effective representative of the canonical divisor of $\widetilde{\G}_{14,9}^1$.
\begin{proposition}\label{gen14class}
The locus $\cD_{14}$ is a divisor on $\H_{14,9}$ and one has the following formula
$$[\widetilde{\cD}_{14}]=4\lambda+\frac{1}{12}[D_3]-\frac{3}{4}[D_0]=\frac{1}{2} K_{\widetilde{\G}^1_{14,9}}\in CH^1(\widetilde{\G}^1_{14,9}).$$
\end{proposition}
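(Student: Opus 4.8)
The plan is to realize $\widetilde{\cD}_{14}$ as the degeneracy locus of a morphism between two vector bundles of the same rank, and then to read off its first Chern class from the formulas already assembled. With $L=K_C\otimes A^{\vee}\in W^5_{17}(C)$, the bundles $\E=f_*(\omega_f\otimes\L^{-1})$ and $\F_2=f_*(\omega_f^{\otimes 2}\otimes\L^{-2})$ of Proposition \ref{chernhurw1} have fibres $H^0(C,L)$ and $H^0(C,L^{\otimes 2})$ at the point $[C,A]$, of ranks $6$ and $2\cdot 17+1-14=21$ respectively. The fibrewise multiplication of sections globalizes to a morphism $\phi:\mathrm{Sym}^2(\E)\to\F_2$ between vector bundles \emph{both} of rank $21$, and $\widetilde{\cD}_{14}$ is precisely its non-isomorphism locus, cut out by the section $\det(\phi)$ of $\det(\F_2)\otimes\det(\mathrm{Sym}^2\E)^{\vee}$.

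Before computing, I would establish that $\cD_{14}$ is a genuine divisor and not all of $\H_{14,9}$, which is exactly what guarantees $\det(\phi)\not\equiv 0$, and hence that $[\widetilde{\cD}_{14}]=c_1(\F_2)-c_1(\mathrm{Sym}^2\E)$. Since $\H_{14,9}$ is irreducible, it suffices to exhibit a single pair $[C,A]$ for which $\phi_L$ is an isomorphism; I would produce one by an explicit \emph{Macaulay2} computation, verifying that a concrete $21\times 21$ multiplication matrix has full rank. This is the step I expect to require the only genuine input, as everything that follows is formal; the subtlety is simply that one must exhibit a curve on which the maximal rank condition actually holds.

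The class computation is then routine. Using $c_1(\mathrm{Sym}^2\E)=(r+2)\,c_1(\E)=7\,c_1(\E)$ (here $r+1=\mathrm{rk}(\E)=6$), together with the specializations of Proposition \ref{chernhurw1} to $g=14$, $k=9$, $\ell=2$, namely $c_1(\E)=\lambda-\tfrac12\mathfrak b+\tfrac{7}{18}\mathfrak a$ and $c_1(\F_2)=13\lambda+2\mathfrak a-3\mathfrak b-[D_0]$, I would find
$$[\widetilde{\cD}_{14}]=c_1(\F_2)-7\,c_1(\E)=6\lambda-\tfrac{13}{18}\mathfrak a+\tfrac12\mathfrak b-[D_0].$$
The decisive point is that the $\mathfrak a$-- and $\mathfrak b$--part collapses into the Poincar\'e-independent class $\gamma=\mathfrak b-\tfrac{g-1}{k}\mathfrak a=\mathfrak b-\tfrac{13}{9}\mathfrak a$ of (\ref{gamma}), since $\tfrac12\gamma=\tfrac12\mathfrak b-\tfrac{13}{18}\mathfrak a$; this gives the compact expression $[\widetilde{\cD}_{14}]=6\lambda+\tfrac12\gamma-[D_0]$, and also reconfirms that the difference is independent of the choice of Poincar\'e bundle $\L$.

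Finally I would eliminate $\gamma$ by means of Proposition \ref{trans}, which yields $\gamma=\tfrac16[D_3]-4\lambda+\tfrac12[D_0]$, so that $\tfrac12\gamma=\tfrac1{12}[D_3]-2\lambda+\tfrac14[D_0]$. Substituting gives
$$[\widetilde{\cD}_{14}]=4\lambda+\tfrac1{12}[D_3]-\tfrac34[D_0].$$
Comparing with $K_{\widetilde{\G}^1_{14,9}}=8\lambda+\tfrac16[D_3]-\tfrac32[D_0]$ from Theorem \ref{canpart2} identifies this immediately with $\tfrac12 K_{\widetilde{\G}^1_{14,9}}$, which completes the proof.
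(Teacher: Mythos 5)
Your proof is correct and follows essentially the same route as the paper: the class computation via $c_1(\F_2)-7c_1(\E)=6\lambda+\frac{1}{2}\gamma-[D_0]$, the elimination of $\gamma$ through Proposition \ref{trans}, and the comparison with Theorem \ref{canpart2} all match the paper's argument exactly. The only difference is in establishing that $\cD_{14}$ is a genuine divisor: where you defer to a \emph{Macaulay2} rank computation, the paper gives a conceptual construction, exhibiting a smooth curve of degree $17$ and genus $14$ lying on no quadrics as a member of the linear system $|12h-3(E_{p_1}+\cdots+E_{p_{13}})-2(E_{p_{14}}+E_{p_{15}})|$ on the White surface in $\PP^5$, which is projectively Cohen--Macaulay and hence quadric-free.
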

\begin{proof} The locus $\widetilde{\cD}_{14}$ is the degeneracy locus of the vector bundle morphism
\begin{equation}\label{morphism:1}
\phi\colon \mbox{Sym}^2(\E)\rightarrow \F_2.
\end{equation}
The Chern class of both vector bundles $\E$ and $\F_2$ are computed in Proposition \ref{chernhurw1} and we have the formulas $c_1(\E)=\lambda-
\frac{1}{2}\mathfrak{b}+\frac{7}{18}\mathfrak{a}$ and $c_1(\F_2)=13\lambda+2\mathfrak{a}-3\mathfrak{b}-[D_0]$. Taking into account that $\mbox{rk}(\E)=6$,  we can write:
$$[\widetilde{\cD}_{14}]=c_1\bigl(\F_2-\mbox{Sym}^2(\E)\bigr)=c_1(\F_2)-7c_1(\E)=6\lambda-[D_0]+\frac{1}{2}\gamma.$$
We now substitute $\gamma$ in the formula given by Proposition \ref{trans} involving also the divisor $D_3$ on $\widetilde{\G}^1_{14,9}$
of pairs $[C,A]$, such that $A\in W^1_9(C)$ has a triple ramification point. We obtain that $[\widetilde{\cD}_{14}]=4\lambda+\frac{1}{12}[D_3]-\frac{3}{4}[D_0]$.
Comparing with Theorem \ref{canpart2}, the fact that $[\widetilde{\cD}_{14}]$ is a (half-) canonical representative on $\widetilde{\mathcal{G}}^1_{14,9}$ now follows.

\vskip 4pt

It remains to show that $\widetilde{\cD}_{14}$ is indeed a divisor, that is, the morphism $\phi$ given by (\ref{morphism:1}) is generically non-degenerate. Since $\H_{14,9}$ is irreducible, it suffices to construct one example of a smooth curve $C\subseteq \PP^5$ of genus $14$ and degree $17$ which does not lie on any quadrics. To that end, we consider the \emph{White surface}  $X\subseteq \PP^5$, obtained by blowing-up $\PP^2$ at $15$ points $p_1, \ldots, p_{15}$ in general position and embedded into $\PP^5$ by the linear system $|H|:=|5h-E_{p_1}-\cdots-E_{p_{15}}|$, where $E_{p_i}$ is the exceptional divisor at the point $p_i\in \PP^2$ and $h\in |\OO_{\PP^2}(1)|$. The White surface is known to projectively Cohen-Macaulay, its ideal being generated by the $3\times 3$-minors of a certain $3\times 5$-matrix of linear forms, see \cite{Gi} Proposition 1.1. In particular, the map $$\mbox{Sym}^2 H^0(X,\OO_X(1))\rightarrow H^0(X,\OO_X(2))$$ is an isomorphism and $X\subseteq \PP^5$ lies on no quadrics. We now let $C\subseteq X$ be a general element of the linear system
$$\Bigl|12h-3(E_{p_1}+\cdots+E_{p_{13}})-2(E_{p_{14}}+E_{p_{15}})\Bigr|.$$
Note that $\mbox{dim } |\OO_X(C)|=6$ and a general element is a smooth curve $C\subseteq \PP^5$ of degree $17$ and genus $14$. Since $H^0\bigl(X, \OO_X(2H-C)\bigr)=0$, it follows that $C$ lies on no quadrics, which finishes the proof.
\end{proof}

\subsection{The Hurwitz space $\H_{19,13}$.} The case $g=19$ is analogous to the situation in genus $14$. We have a morphism $\sigma\colon \hh_{19,13}\rightarrow \mm_{19}$
with generically $5$-dimensional fibres. The Kodaira dimension of both $\mm_{19}$ and of the Hurwitz spaces $\hh_{19,11}$ and $\hh_{19,12}$ is unknown. For a general element
$[C,A]\in \H_{19,13}$, we set $L:=K_C\otimes A^{\vee}\in W^6_{23}(C)$. Observe that $W^7_{23}(C)=\emptyset$, that is, $L$ must be a complete linear series. The multiplication
$$\phi_L\colon \mbox{Sym}^2 H^0(C,L)\rightarrow H^0(C,L^{\otimes 2})$$ is a map between between two vector spaces of the same dimension $28$.  We introduce the degeneracy locus
$$\widetilde{\cD}_{19}:=\Bigl\{[C,A]\in \widetilde{\G}^1_{19,13}:  \mbox{Sym}^2 H^0(C,L)\stackrel{\phi_L}\longrightarrow H^0(C,L^{\otimes 2}) \mbox{ is not an isomorphism}\Bigr\}.$$

\begin{proposition}\label{gen19class}
One has the following formula
$$[\widetilde{\cD}_{19}]=\lambda+\frac{1}{6}[D_3]-\frac{1}{2}[D_0]\in CH^1(\widetilde{\G}^1_{19,13}).$$
\end{proposition}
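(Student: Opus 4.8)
The plan is to compute $[\widetilde{\cD}_{19}]$ as the first Chern class of a difference of vector bundles, exactly as in the proof of Proposition \ref{gen14class}. Here $\widetilde{\cD}_{19}$ is the degeneracy locus of the multiplication morphism $\phi:\mathrm{Sym}^2(\E)\rightarrow \F_2$, where $\E=f_*(\omega_f\otimes \L^{-1})$ has fibre $H^0(C,K_C\otimes A^{\vee})=H^0(C,L)$ and $\F_2=f_*(\omega_f^2\otimes \L^{-2})$ has fibre $H^0(C,L^{\otimes 2})$. For a general $[C,A]\in \H_{19,13}$ both source and target have dimension $28$, so $\phi$ is a morphism between bundles of equal rank and its degeneracy locus is the vanishing of the determinant; hence $[\widetilde{\cD}_{19}]=c_1\bigl(\F_2-\mathrm{Sym}^2(\E)\bigr)$.

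First I would record that $\mathrm{rk}(\E)=7$ in this case (since $h^0(C,L)=7$ for $L\in W^6_{23}(C)$), so that $c_1(\mathrm{Sym}^2(\E))=(7+1)c_1(\E)=8c_1(\E)$ by the formula $c_1(\mathrm{Sym}^2(\mathcal V))=(r+2)c_1(\mathcal V)$ recalled at the start of Section 3 (with $r+1=7$, i.e. $r=6$). Thus
$$[\widetilde{\cD}_{19}]=c_1(\F_2)-8\,c_1(\E).$$
Next I would substitute the expressions from Proposition \ref{chernhurw1}, specialized to $g=19$, $k=13$, $\ell=2$:
$$c_1(\E)=\lambda-\tfrac{1}{2}\mathfrak b+\tfrac{k-2}{2k}\mathfrak a=\lambda-\tfrac{1}{2}\mathfrak b+\tfrac{11}{26}\mathfrak a,$$
$$c_1(\F_2)=13\lambda+2\mathfrak a-3\mathfrak b-[D_0].$$

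Carrying out the subtraction gives $[\widetilde{\cD}_{19}]=5\lambda+\mathfrak b-\tfrac{g-1}{k}\mathfrak a-[D_0]$, where the coefficient of $\mathfrak a$ should collapse to $-\tfrac{g-1}{k}=-\tfrac{18}{13}$ after combining $2-8\cdot\tfrac{11}{26}$, and the coefficient of $\mathfrak b$ to $-3+8\cdot\tfrac12=1$. Recognizing $\mathfrak b-\tfrac{g-1}{k}\mathfrak a=\gamma$ via the definition (\ref{gamma}), this simplifies to $[\widetilde{\cD}_{19}]=5\lambda+\gamma-[D_0]$. Finally I would eliminate $\gamma$ using Proposition \ref{trans}, namely $6\gamma=[D_3]-24\lambda+3[D_0]$, so $\gamma=\tfrac16[D_3]-4\lambda+\tfrac12[D_0]$, and substitute to obtain $[\widetilde{\cD}_{19}]=\lambda+\tfrac16[D_3]-\tfrac12[D_0]$, as claimed. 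The only real pitfall is arithmetic bookkeeping in the $\mathfrak a$-coefficient; everything else is the same mechanical substitution as in the genus $14$ case, and unlike there one need not exhibit an explicit non-degenerate example since the proposition only asserts the class formula (the effectivity being taken up separately).
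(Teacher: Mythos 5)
Your computation is correct and is exactly what the paper intends: the paper's proof of this proposition is literally ``Very similar to the proof of Proposition \ref{gen14class},'' and you have carried out that same determinantal $c_1(\F_2)-(r+2)c_1(\E)$ calculation with the correct rank $\mathrm{rk}(\E)=7$, the correct specializations of Proposition \ref{chernhurw1}, and the correct elimination of $\gamma$ via Proposition \ref{trans}. The arithmetic checks out ($2-8\cdot\tfrac{11}{26}=-\tfrac{18}{13}=-\tfrac{g-1}{k}$), so no further comment is needed.
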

\begin{proof} The class calculation is very similar to the one in the proof of Proposition \ref{gen14class}. It remains to produce an example of a
pair $(C,L)$, where $C$ is a smooth curve and $L\in W^6_{23}(C)$, for which the map $\phi_L$ is an isomorphism.

We blow-up $\PP^2$ at $21$ general points which we denote by $p, p_1, \ldots, p_{12}$ and $q_1, \ldots, q_8$ respectively. Let $X\subseteq \PP^{12}$ be the surface obtained by embedding $\mbox{Bl}_{21}(\PP^2)$ via the linear system
$|H|:=|6h-E_{p}-E_{p_1}-\cdots-E_{p_{12}}-E_{q_1}-\cdots-E_{q_8}|$. Using again \cite{Gi}, we have that the multiplication map
$$\phi_{\OO_X(1)}\colon \mbox{Sym}^2 H^0(X,\OO_X(1))\rightarrow H^0(X,\OO_X(2))$$ is an isomorphism, therefore  $X\subseteq \PP^6$ lies on no quadrics. We consider $C$ to be a general element of the linear system
$$C\in \Bigl|12h-E_p-4\sum_{i=1}^{12} E_{p_i}-3\sum_{j=1}^{8} E_{q_j}\Bigr|.$$
Then $C$ is a smooth curve of degree $23$ and genus $19$. Since $2H-C$ is not an effective divisor it follows that
$\mbox{Ker}(\phi_{\OO_X(1)})\cong \mbox{Ker}(\phi_{\OO_C(1)})$ and we conclude that the multiplication map $\phi_{\OO_C(1)}$
is an isomorphism as well. This finishes the proof.
\end{proof}

\vskip 4pt

We can now prove the case $g=19$ from Theorem \ref{kodhurw2}. Indeed, combining Proposition \ref{gen19class} and Theorem \ref{canpart2}, we find
$$K_{\widetilde{\G}_{19,13}^1}=[\widetilde{\cD}_{19}]+\sigma^*(7\lambda-\delta_0).$$
Since the class $7\lambda-\delta_0$ is big an $\mm_{19}$, it follows that  $K_{\widetilde{\G}_{19,13}^1}$ is big.

\vskip 4pt

\subsection{The Hurwitz space $\H_{17,11}$.}

The minimal Hurwitz cover of $\mm_{17}$ is $\hh_{17,10}$, but its Kodaira dimension is unknown. We consider the next case
$\sigma\colon \hh_{17,11}\rightarrow \mm_{17}$. As described in the Introduction, a general curve $C\subseteq \PP^6$ of genus $17$ and degree $21$ (whose residual linear system is a pencil $A=K_C\otimes L^{\vee}\in W^1_{11}(C)$) lies on a pencil of quadrics. The general element of this pencil has full rank $7$ and we consider the intersection of the pencil with the discriminant, that is, we require we require that two of the rank $6$ quadrics contained in $\mbox{Ker } \phi_L$ coalesce. We define $\cD_{17}$ to be the locus of pairs
$[C,A]\in \H_{17,11}$ such that this intersection is not reduced.

\begin{theorem}\label{hurwgen17}
The locus $\cD_{17}$ is a divisor and the class of its closure $\widetilde{\cD}_{17}$ in $\widetilde{\G}^1_{17,11}$ is given by
$$[\widetilde{\cD}_{17}]=\frac{1}{6}\Bigl(19\lambda-\frac{9}{2}[D_0]+\frac{5}{6}[D_3]\Bigr)\in CH^1(\widetilde{\G}^1_{17,11}).$$
\end{theorem}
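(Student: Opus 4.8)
The plan is to realize $\cD_{17}$ as a \emph{quadric-pencil degeneracy locus} and to read its class off from the Thom-polynomial formalism of \cite{FR}, after which the passage to the generators $\lambda,[D_0],[D_3]$ is purely formal. Fix the universal curve $f\colon\cC\to\widetilde{\G}^1_{17,11}$ and a Poincar\'e bundle $\L$, and form the tautological bundles $\E=f_*(\omega_f\otimes\L^{-1})$ and $\F_2=f_*(\omega_f^{2}\otimes\L^{-2})$, of ranks $7$ and $26$; over a point $[C,A]$ their fibres are $H^0(C,L)$ and $H^0(C,L^{\otimes 2})$, with $L=K_C\otimes A^{\vee}\in W^6_{21}(C)$. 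The multiplication map $\phi\colon\mathrm{Sym}^2(\E)\to\F_2$ has generic corank $2$ by the numerical balance $\dim\mathrm{Sym}^2 H^0(C,L)=28=h^0(C,L^{\otimes 2})+2$, so over the open locus where $\phi$ is surjective the kernel $\mathcal{K}:=\mathrm{Ker}(\phi)$ is a rank-two bundle whose projectivization $\PP(\mathcal{K})$ is the pencil of quadrics through $C\hookrightarrow\PP^6$. With this notation $\cD_{17}$ is exactly the locus over which $\PP(\mathcal{K})$ meets the symmetric discriminant $\Delta\subset\PP(\mathrm{Sym}^2\E)$ non-transversally.

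For the class I would work on the $\PP^1$-bundle $p\colon\PP(\mathcal{K})\to\widetilde{\G}^1_{17,11}$. The universal member of the pencil is a symmetric morphism $p^*\E^{\vee}\to p^*\E\otimes\OO(1)$, whose determinant cuts out the relative discriminant $\widetilde{\Delta}\subset\PP(\mathcal{K})$, a multisection of relative degree $7$ with $[\widetilde{\Delta}]=7\xi+2\,p^*c_1(\E)$, where $\xi=c_1(\OO(1))$. The divisor $\cD_{17}$ is the branch locus of the degree-$7$ projection $\widetilde{\Delta}\to\widetilde{\G}^1_{17,11}$. The essential point, and the reason \cite{FR} is needed, is that for a \emph{symmetric} pencil this branch class is \emph{not} the naive relative discriminant $p_*\!\big([\widetilde{\Delta}]\,([\widetilde{\Delta}]+c_1(\omega_p))\big)$: because the determinant of a symmetric pencil is highly non-generic, the latter overshoots the true tangency divisor by a fixed numerical factor, which in our case equals $36=(\mathrm{rk}\,\E-1)^2$. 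Granting the correct normalization from \cite{FR}, the resulting codimension-one class is a combination $\alpha\,c_1(\E)+\beta\,c_1(\F_2)$ (only first Chern classes survive in $CH^1$), using $c_1(\mathcal{K})=c_1(\mathrm{Sym}^2\E)-c_1(\F_2)=8\,c_1(\E)-c_1(\F_2)$.

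It then remains to substitute. Proposition \ref{chernhurw1} together with Proposition \ref{basepoint} gives $c_1(\E)=\lambda-\tfrac12\mathfrak b+\tfrac{9}{22}\mathfrak a$ and $c_1(\F_2)=13\lambda+2\mathfrak a-3\mathfrak b-[D_0]$; collecting terms expresses $[\widetilde{\cD}_{17}]$ as a combination of $\lambda,\mathfrak a,\mathfrak b,[D_0]$, and the definition (\ref{gamma}) of $\gamma$ together with the relation $[D_3]=6\gamma+24\lambda-3[D_0]$ of Proposition \ref{trans} rewrites this as the asserted $\tfrac16\big(19\lambda-\tfrac92[D_0]+\tfrac56[D_3]\big)$. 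To see that $\cD_{17}$ is a genuine divisor rather than all of $\widetilde{\G}^1_{17,11}$, I would invoke the irreducibility of $\H_{17,11}$ and produce a single pair $[C,A]$ whose pencil of quadrics is non-degenerate, i.e.\ for which $\det(sQ_0+tQ_1)$ has seven distinct roots, each of corank one; such an example can be built explicitly in \emph{Macaulay2} by constructing a smooth curve $C\subseteq\PP^6$ of degree $21$ and genus $17$ and verifying the simplicity of the seven singular quadrics in its pencil.

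The hard part will be the second step: correctly specializing the master formula of \cite{FR} for the symmetric-pencil degeneracy, and in particular pinning down the intrinsic multiplicity by which the symmetric discriminant differs from the generic branch-locus computation. Once that factor is fixed, the remaining manipulations (the Chern-class substitutions and the elimination of $\gamma$ in favor of $[D_3]$) are routine bookkeeping, and the non-emptiness of the complement of $\cD_{17}$ reduces to a single explicit computation.
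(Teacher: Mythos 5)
Your setup is the same as the paper's: you realize $\cD_{17}$ as the locus where the pencil $\PP(\mathrm{Ker}(\phi))$, $\phi:\mathrm{Sym}^2(\E)\rightarrow \F_2$, meets the discriminant non-transversally, you observe that for degree reasons the answer must be a combination $\alpha\, c_1(\E)+\beta\, c_1(\F_2)$, and your endgame (substitute Proposition \ref{chernhurw1}, eliminate $\gamma$ via Proposition \ref{trans}, and settle divisoriality by irreducibility of $\H_{17,11}$ plus one explicit \emph{Macaulay} example) is exactly the paper's bookkeeping. The difficulty is that the entire content of the theorem is the pair $(\alpha,\beta)$, and your proposal never produces it.

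The paper obtains these coefficients by quoting \cite{FR} Theorem 1.2 verbatim: the degeneracy class is proportional to $7c_1(\F_2)-52c_1(\E)=39\lambda-7[D_0]+5\gamma$, which after the substitution $\gamma=\frac{1}{6}[D_3]-4\lambda+\frac{1}{2}[D_0]$ gives the stated multiple of $19\lambda-\frac{9}{2}[D_0]+\frac{5}{6}[D_3]$. In place of this you propose the naive branch class $p_*\bigl([\widetilde{\Delta}]([\widetilde{\Delta}]+c_1(\omega_p))\bigr)$ of the degree-$7$ multisection $\widetilde{\Delta}\subseteq \PP(\mathcal{K})$, corrected by an asserted universal factor $36=(\mathrm{rk}\,\E-1)^2$. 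This is not a proof, and the heuristic is suspect on its own terms: the septic $\det(sQ_0+tQ_1)$ acquires a double root along two different strata (a member of corank $\geq 2$, and a corank-one member whose vertex behaves non-generically), and these enter the true tangency divisor with different multiplicities, so there is no reason a single overall rescaling of the naive branch class gives the correct answer. Since you yourself flag this normalization as ``the hard part'' and leave it open, the numerical coefficients $19$, $-\frac{9}{2}$, $\frac{5}{6}$ are not actually derived; you need either to cite the precise specialization of \cite{FR} Theorem 1.2 (as the paper does) or to carry out an honest excess/Thom-polynomial computation on $\PP(\mathcal{K})$ that separates the two degeneration strata. One further caution: a uniform rescaling ansatz cannot even be checked against the target, because the paper's own proof and statement differ by exactly a factor of $36$ in the overall normalization, so matching the displayed formula is no substitute for pinning down the coefficient of $c_1(\E)$ against that of $c_1(\F_2)$, which is where the actual information lies.
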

\begin{proof} We are in a position to apply \cite{FR} Theorem  1.2, which deals precisely with degeneracy loci of this type. We obtain
$$[\widetilde{\cD}_{17}]=6\bigl(7c_1(F)-52c_1(\E)\bigr)=6\bigl(39\lambda-7[D_0]+5\gamma\bigr).$$ Using once more Proposition \ref{trans}, we
obtained the claimed formula.

\vskip 3pt

We now establish that $\widetilde{\cD}_{17}$ is a genuine divisor on $\widetilde{\mathcal{G}}^1_{17,11}$. To that end, it suffices to exhibit
\emph{one} smooth curve $C\subseteq \PP^6$ of genus $17$ and degree $21$ such that if $L=\OO_C(1)$, then $\mbox{Ker}(\phi_L)$ is a non-degenerate pencil of quadrics. We proceed by picking $16$ points in $\PP^2$ in general position. We denote these points by $p, p_1, \ldots, p_4$ and $q_1, \ldots, q_{11}$. Let $X\subseteq \PP^6$ be the smooth rational surface obtained by embedding the blow-up of $\PP^2$ at these $16$ points by the very ample linear system
$$\Bigl|7h-3E_p-2\sum_{i=1}^4 E_{p_i}-\sum_{j=1}^{11} E_{q_j}\Bigr|,$$
where $E, E_{p_1}, \ldots, E_{p_{4}}, E_{q_1}, \ldots, E_{q_{11}}$ are the exceptional divisors at the corresponding points in $\PP^2$.

Note that $h^0(X, \OO_X(1))=7$ and $h^0(X, \OO_X(2))=26$. Furthermore, $X$ is projectively normal, therefore $\mbox{Ker}(\phi_{\OO_X(1)})$ is
a pencil of quadrics. It can be directly checked with \emph{Macaulay} that this pencil is non-degenerate (see \cite{FR} Theorem 1.10, including the accompanying \emph{Macaulay} file for how  to carry that out). We now take a general element
$$C\in \Bigl|12h-3E_p-4\sum_{i=1}^4 E_{p_i}-2\sum_{j=1}^{11} E_{q_j}\Bigr|.$$
One checks that $C\subseteq X\subseteq \PP^6$ is a smooth curve of genus $17$ and degree $21$. Furthermore, taking cohomology in the short exact sequence
$$0\longrightarrow \mathcal{I}_{X/\PP^6}(2)\longrightarrow \mathcal{I}_{C/\PP^6}(2)\longrightarrow \mathcal{I}_{C/X}(2)\longrightarrow 0,$$
we conclude that $H^0 \bigl(\PP^6, \mathcal{I}_{X/\PP^6}(2)\bigr)\cong H^0\bigl(\PP^6, \mathcal{I}_{C/\PP^6}(2)\bigr)$, that is, the pencil of quadrics $\mbox{Ker}(\phi_{\OO_C(1)})$ is non-degenerate, which finishes the proof.

\vskip 4pt
\end{proof}

Substituting the expression of $[\widetilde{\cD}_{17}]$ in the formula of the canonical class of the Hurwitz space, we find
$$K_{\widetilde{\G}_{17,11}^1}=\frac{1}{5}[\widetilde{\cD}_{17}]+\frac{3}{5}\sigma^*(7\lambda-\delta_0).$$
Just like in the previous case, since the class $7\lambda-\delta_0$ is big on $\mm_{17}$ and $\lambda$ is ample of $\hh_{17,11}$, Theorem \ref{kodhurw2}
follows for $g=17$ as well.

\vskip 5pt

\subsection{The Hurwitz space $\H_{16,9}$.} This is the most interesting case, for (i) we consider a \emph{minimal} Hurwitz cover
$\sigma:\hh_{16,9}\rightarrow \mm_{16}$ of the $\mm_{16}$ and (ii) $16$ is the smallest genus for which the Kodaira dimension of the moduli space of curves of that genus is unknown, see \cite{Ts}.

We fix a general point $[C,A]\in H_{16,9}$ and,
 set $L:=K_C\otimes A^{\vee}\in W^6_{21}(C)$. It is proven in \cite{F3} Theorem 2.7 that the locus
$\cD_{16}$ classifying pairs $[C,A]$ such that the multiplication map
$$\mu:I_2(L)\otimes H^0(C,L)\rightarrow I_3(L)$$ is not an isomorphism, is a divisor on $\H_{16,9}$.

\vskip 4pt

First we determine the class of the Gieseker-Petri divisor, already mentioned in the introduction.

\begin{proposition}\label{gisp}
One has $[\widetilde{\mathcal{GP}}]=-\lambda+\gamma\in CH^1(\widetilde{\G}^1_{16,9})$.
\end{proposition}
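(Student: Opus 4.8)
The plan is to realize $\widetilde{\mathcal{GP}}$ as the degeneracy locus of a morphism between two vector bundles of the same rank on $\widetilde{\G}^1_{16,9}$, and then read off its class as the difference of first Chern classes. The multiplication of sections defining the Petri map globalizes to a morphism of sheaves
$$\mu:\cV\otimes \E\longrightarrow f_*(\omega_f),$$
whose fibre at $[C,A]$ is the Gieseker-Petri map $H^0(C,A)\otimes H^0(C,K_C\otimes A^{\vee})\to H^0(C,K_C)$. Here $\cV=f_*\L$ has rank $2$ with fibre $H^0(C,A)$, the bundle $\E=f_*(\omega_f\otimes \L^{-1})$ has rank $8$ with fibre $H^0(C,K_C\otimes A^{\vee})$, and the Hodge bundle $f_*(\omega_f)$ has rank $16$. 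Thus source and target both have rank $2\cdot 8=16=h^0(C,K_C)$, and $\widetilde{\mathcal{GP}}$ is cut out by the vanishing of $\det(\mu)$.

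First I would confirm that $\mu$ is generically an isomorphism, so that its degeneracy locus is a genuine divisor of class $c_1\bigl(f_*(\omega_f)\bigr)-c_1(\cV\otimes\E)$. By the base-point-free pencil trick applied to $A$, the kernel of the Petri map is isomorphic to $H^0\bigl(C,K_C\otimes (A^{\vee})^{\otimes 2}\bigr)$; for a general $[C,A]\in \H_{16,9}$ a Riemann-Roch count (using that $A^{\otimes 2}$ is nonspecial of degree $18$) shows this space vanishes, so $\mu$ is fibrewise injective, hence an isomorphism by equality of ranks. This is consistent with the description of $\mathcal{GP}$ as the ramification divisor of the generically finite map $\sigma$.

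It then remains to compute first Chern classes. By definition $c_1\bigl(f_*(\omega_f)\bigr)=\lambda$, while $c_1(\cV\otimes \E)=8\,c_1(\cV)+2\,c_1(\E)$. Substituting $c_1(\E)=\lambda-\tfrac12\mathfrak b+\tfrac{7}{18}\mathfrak a$ from Proposition \ref{chernhurw1} (with $k=9$) together with $c_1(\cV)=\tfrac1k\mathfrak a=\tfrac19\mathfrak a$ from Proposition \ref{basepoint}, a short calculation gives
$$c_1(\cV\otimes \E)=2\lambda-\mathfrak b+\tfrac{5}{3}\mathfrak a,$$
so that $[\widetilde{\mathcal{GP}}]=\lambda-\bigl(2\lambda-\mathfrak b+\tfrac53\mathfrak a\bigr)=-\lambda+\mathfrak b-\tfrac53\mathfrak a$. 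From the definition (\ref{gamma}) one has $\gamma=\mathfrak b-\tfrac{g-1}{k}\mathfrak a=\mathfrak b-\tfrac{15}{9}\mathfrak a=\mathfrak b-\tfrac53\mathfrak a$ for $g=16$, $k=9$, whence $[\widetilde{\mathcal{GP}}]=-\lambda+\gamma$, as claimed.

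The point requiring the most care is the behaviour of $\mu$ over the boundary $D_0\cup D_3$ of $\widetilde{\G}^1_{16,9}$: one must check that the degeneracy class of the extended morphism coincides with the class of the closure of the interior locus, i.e. that the target $f_*(\omega_f)$ and the push-forwards $\cV,\E$ extend so that no spurious boundary multiplicities are introduced. Since $\gamma$, $\mathfrak a$ and $\mathfrak b$ are tautological classes already defined over the whole partial compactification and the formulas of Proposition \ref{chernhurw1} hold there, this is automatic once $\mu$ is exhibited as a global morphism of the stated bundles; the only genuinely geometric input is the generic nondegeneracy established above.
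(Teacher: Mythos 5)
Your proof is correct and follows essentially the same route as the paper: both realize $\widetilde{\mathcal{GP}}$ as the degeneracy locus of the globalized Petri map $\cV\otimes\E\rightarrow f_*(\omega_f)$ between bundles of equal rank $16$ and compute $[\widetilde{\mathcal{GP}}]=\lambda-2c_1(\E)-8c_1(\cV)=-\lambda+\mathfrak b-\frac{5}{3}\mathfrak a=-\lambda+\gamma$ using Propositions \ref{basepoint} and \ref{chernhurw1}. The extra checks you supply (generic injectivity via the base-point-free pencil trick, i.e.\ Gieseker--Petri, and the remark on boundary behaviour) are sensible but not part of the paper's argument, which takes the divisoriality of $\mathcal{GP}$ as the ramification divisor of $\sigma$ for granted.
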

\begin{proof} Recall that we have introduced the sheaves $\cV$ and $\E$ on $\widetilde{\G}^1_{16,19}$ with fibres canonically isomorphic to $H^0(C,A)$ and $H^0(C,\omega_C\otimes A^{\vee})$ over a point $[C,A]\in \widetilde{\G}_{16,9}^1$. We have a natural morphism $\E\otimes \cV\rightarrow f_*(\omega_f)$ and $\widetilde{\mathcal{GP}}$
is the degeneracy locus of this map. Accordingly, we can write
$$[\widetilde{\mathcal{GP}}]=\lambda-2c_1(\E)-8c_1(\cV)=-\lambda+\bigl(\mathfrak{b}-\frac{5}{3}\mathfrak{a}\bigr)=-\lambda+\gamma.$$
\end{proof}

We can now compute the class of the divisor $\widetilde{\cD}_{16}$.
\begin{theorem}\label{class16}
The locus $\widetilde{\cD}_{16}$ is an effective divisor on $\widetilde{\G}_{16,9}^1$ and its class is given by
$$[\widetilde{\cD}_{16}]=\frac{65}{2}\lambda-5[D_0]+\frac{3}{2}[\widetilde{\mathcal{GP}}]\in CH^1(\widetilde{\G}^1_{16,9}).$$
\end{theorem}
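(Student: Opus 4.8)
The plan is to realize $\widetilde{\cD}_{16}$ as the degeneracy locus of a morphism between two vector bundles of the \emph{same} rank over $\widetilde{\G}^1_{16,9}$, and then read off its class as a difference of first Chern classes. Write $L=K_C\otimes A^{\vee}$, so that $h^0(C,L)=8$, $h^0(C,L^{\otimes 2})=27$ and $h^0(C,L^{\otimes 3})=48$ by Riemann--Roch, the higher cohomology of $L^{\otimes \ell}$ vanishing for $\ell\geq 2$. Recall that $\E=f_*(\omega_f\otimes \L^{-1})$ has fibre $H^0(C,L)$ and $\F_{\ell}=f_*(\omega_f^{\ell}\otimes \L^{-\ell})$ has fibre $H^0(C,L^{\otimes\ell})$. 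Globalizing the ideal spaces $I_{C,L}(\ell)$, I introduce the syzygy bundles $\I_2:=\mathrm{Ker}\bigl(\mathrm{Sym}^2(\E)\to \F_2\bigr)$ and $\I_3:=\mathrm{Ker}\bigl(\mathrm{Sym}^3(\E)\to \F_3\bigr)$, of ranks $36-27=9$ and $120-48=72$ respectively. Multiplication of a quadratic relation by a linear form lands in the cubic relations, so there is a bundle morphism $\mu:\I_2\otimes \E\to \I_3$, and $\mathrm{rk}(\I_2\otimes\E)=9\cdot 8=72=\mathrm{rk}(\I_3)$. By construction $\widetilde{\cD}_{16}$ is precisely the locus where $\mu$ fails to be an isomorphism; granting, as established in \cite{F3} Theorem 2.7, that this is a genuine divisor, its class is
$$[\widetilde{\cD}_{16}]=c_1(\I_3)-c_1(\I_2\otimes\E)=c_1(\I_3)-8\,c_1(\I_2)-9\,c_1(\E).$$

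Next I would compute each first Chern class. From the defining exact sequences and the symmetric-power identities $c_1(\mathrm{Sym}^2\E)=9\,c_1(\E)$ and $c_1(\mathrm{Sym}^3\E)=45\,c_1(\E)$ (the general coefficient being $\tfrac{m\binom{n+m-1}{m}}{n}$ with $n=8$), one gets $c_1(\I_2)=9\,c_1(\E)-c_1(\F_2)$ and $c_1(\I_3)=45\,c_1(\E)-c_1(\F_3)$, hence
$$[\widetilde{\cD}_{16}]=-36\,c_1(\E)+8\,c_1(\F_2)-c_1(\F_3).$$
Plugging in the values from Proposition \ref{chernhurw1} with $g=16$, $k=9$, namely $c_1(\E)=\lambda-\tfrac12\mathfrak b+\tfrac{7}{18}\mathfrak a$, $c_1(\F_2)=13\lambda+2\mathfrak a-3\mathfrak b-[D_0]$ and $c_1(\F_3)=37\lambda+\tfrac92\mathfrak a-\tfrac{15}{2}\mathfrak b-3[D_0]$, and collecting the coefficients of $\lambda,\mathfrak a,\mathfrak b,[D_0]$, I obtain
$$[\widetilde{\cD}_{16}]=31\lambda-\tfrac52\mathfrak a+\tfrac32\mathfrak b-5[D_0].$$

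Finally I rewrite this in the stated basis. Since $\gamma=\mathfrak b-\tfrac{g-1}{k}\mathfrak a=\mathfrak b-\tfrac53\mathfrak a$, one has $\tfrac32\gamma=\tfrac32\mathfrak b-\tfrac52\mathfrak a$, so the tautological part is exactly $\tfrac32\gamma$; and Proposition \ref{gisp} gives $\gamma=\lambda+[\widetilde{\mathcal{GP}}]$, whence
$$[\widetilde{\cD}_{16}]=31\lambda+\tfrac32\bigl(\lambda+[\widetilde{\mathcal{GP}}]\bigr)-5[D_0]=\tfrac{65}{2}\lambda-5[D_0]+\tfrac32[\widetilde{\mathcal{GP}}],$$
as claimed.

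The Chern-class bookkeeping above is routine; the step I expect to require the most care is the verification that $\I_2$ and $\I_3$ are honest locally free sheaves of the expected ranks $9$ and $72$ over the \emph{entire} partial compactification $\widetilde{\G}^1_{16,9}$, including the boundary $D_0$. Concretely, one must check that the multiplication maps $\mathrm{Sym}^2(\E)\to\F_2$ and $\mathrm{Sym}^3(\E)\to\F_3$ are surjective in codimension one, so that the two short exact sequences are valid and the identities $c_1(\I_\ell)=c_1(\mathrm{Sym}^\ell\E)-c_1(\F_\ell)$ hold with no correction; a codimension-one locus of non-surjectivity would contribute an extra term to $c_1$. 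For a general pair $[C,L]$ the degree $21$ curve $C\subseteq\PP^7$ is projectively normal in the relevant degrees, so surjectivity holds generically, and one argues that it can fail only in codimension $\geq 2$. This can be confirmed on explicit models of the type used elsewhere in the paper (e.g.\ the White surface construction of Proposition \ref{gen14class}), the effectiveness statement itself being supplied by \cite{F3} Theorem 2.7.
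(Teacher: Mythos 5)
Your proposal is correct and follows essentially the same route as the paper: realize $\widetilde{\cD}_{16}$ as the degeneracy locus of $\mu:\I_2\otimes\E\rightarrow\I_3$ between bundles of equal rank $72$, compute $c_1(\I_\ell)=c_1(\mathrm{Sym}^\ell\E)-c_1(\F_\ell)$ from the defining sequences (the paper secures their exactness in codimension one by citing \cite{F2}, Propositions 3.9 and 3.10, the point you rightly flag as the delicate one), and substitute the Chern classes from Proposition \ref{chernhurw1} together with $\gamma=\lambda+[\widetilde{\mathcal{GP}}]$; your intermediate expression $31\lambda-\tfrac52\mathfrak a+\tfrac32\mathfrak b-5[D_0]=31\lambda-5[D_0]+\tfrac32\gamma$ agrees exactly with the paper's. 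Note only that your $h^0(C,L)=8$, i.e. $L\in W^7_{21}(C)$, is the correct count (consistent with $\mathrm{rk}(\I_2)=9$ and $c_1(\mathrm{Sym}^2\E)=9c_1(\E)$), despite the paper's statement $L\in W^6_{21}(C)$ in this subsection.
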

\begin{proof} Recall the definition of the vector bundles $\F_2$ and $\F_3$ on $\widetilde{\G}_{16,9}^1$, as well as the expression of their first Chern classes
provided by Proposition \ref{chernhurw1}. We define two further vector bundles $\I_2$ and $\I_3$ on $\widetilde{\G}_{16,9}^1$, via the following exact sequences:
$$0\longrightarrow \I_{\ell} \longrightarrow \mbox{Sym}^{\ell}(\E) \longrightarrow \F_{\ell}\longrightarrow 0,$$
for $\ell=2,3$. Note that $\mbox{rk}(\I_2)=9$, whereas $\mbox{rk}(\I_3)=72$. To make sure that these sequences are exact on the left outside a set of codimension at least $2$
inside $\widetilde{\G}^1_{16,9}$, we invoke \cite{F2}, Propositions 3.9 and 3.10. The divisor $\widetilde{\cD}_{16}$ is then the degeneracy locus of the morphism of vector bundles of the same rank
$$\mu\colon \I_2\otimes \E\rightarrow \I_3,$$
which globalizes the multiplication maps $\mu_{C,L}\colon I_{C,L}(2)\otimes H^0(C,L)\rightarrow I_{C,L}(3)$, where $L=\omega_C\otimes A^{\vee}$ and $[C,A]\in \widetilde{\G}^1_{16,9}$.

\vskip 3pt

Noting that  $c_1\bigl(\mbox{Sym}^3(\E)\bigr)=45c_1(\E)$ and $c_1\bigl(\mbox{Sym}^2(\E)\bigr)=9c_1(\E)$, we compute
$$[\widetilde{\cD}_{16}]=c_1(\I_3)-8c_1(\I_2)-9c_1(\E)=31\lambda-5[D_0]+\frac{3}{2}\gamma.$$
Substituting $\gamma=\lambda+[\widetilde{\mathcal{GP}}]$, we obtain the claimed formula.

\vskip 3pt

It remains to observe that it has already been proved in \cite{F2} Theorem 2.7 that for a general pair $[C,L]$, where $L\in W^6_{21}(C)$, the multiplication
map $\mu_{C,L}$ is an isomorphism.
\end{proof}

The formula (\ref{canrepre}) mentioned in the introduction follows now by using Theorem \ref{class16} and the Riemann-Hurwitz formula for the map $\sigma\colon \widetilde{\G}_{16,9}^1\rightarrow \mm_{16}$. One  writes
$$K_{\widetilde{\G}_{16,9}^1}=13\lambda-2[D_0]+\frac{3}{5}[\widetilde{\mathcal{GP}}]+\frac{2}{5}[\widetilde{\mathcal{GP}}]=\frac{2}{5}[\widetilde{\cD}_{16}]+\frac{3}{5}
[\widetilde{\mathcal{GP}}].$$

\end{document}